\titlespacing{\section}{0pt}{*0}{*0}
\titlespacing{\subsection}{0pt}{*0}{*0}
\titlespacing{\subsubsection}{0pt}{*0}{*0}
\newcommand{\Var}{\mathrm{Var}}
\newcommand{\E}{\mathrm{E}}
\newcommand{\norm}[1]{\left\lVert#1\right\rVert}
\newtheorem{theorem}{Theorem}
\newtheorem{lemma}{Lemma}
\newtheorem{corollary}{Corollary}
\newtheorem{remark}{\textit{Remark}}
\date{}
\title{\textbf{Bayesian inference for higher order ordinary differential equation models}}
\author{Prithwish Bhaumik and Subhashis Ghosal\\North Carolina State University}
\begin{document}
\maketitle
\begin{abstract}
Often the regression function appearing in fields like economics, engineering, biomedical sciences obeys a system of higher order ordinary differential equations (ODEs). The equations are usually not analytically solvable. We are interested in inferring on the unknown parameters appearing in the equations. Significant amount of work has been done on parameter estimation in first order ODE models. \citet{bhaumik2014bayesiant} considered a two-step Bayesian approach by putting a finite random series prior on the regression function using B-spline basis. The posterior distribution of the parameter vector is induced from that of the regression function. Although this approach is computationally fast, the Bayes estimator is not asymptotically efficient. \citet{bhaumik2014efficient} remedied this by directly considering the distance between the function in the nonparametric model and a Runge-Kutta (RK$4$) approximate solution of the ODE while inducing the posterior distribution on the parameter. They also studied the direct Bayesian method obtained from the approximate likelihood obtained by the RK4 method. In this paper we extend these ideas for the higher order ODE model and establish Bernstein-von Mises theorems for the posterior distribution of the parameter vector for each method with $n^{-1/2}$ contraction rate.
\end{abstract}

\textbf{Keywords:} Approximate likelihood, Bayesian inference, Bernstein-von Mises theorem, higher order ordinary differential equation, Runge-Kutta method, spline smoothing.
%\begin{keyword}[class=MSC]
%\kwd{62J02, 62G08, 62G20, 62F12, 62F15}
%\end{keyword}

\section{Introduction}
Consider a regression model $Y=f_{\bm\theta}(x)+\bm\varepsilon$ with unknown parameter $\bm\theta\in\bm\Theta\subseteq\mathbb{R}^p$ and $x\in[0,1]$. The functional form of $f_{\bm\theta}(\cdot)$ is not known but $f_{\bm\theta}(\cdot)$ satisfy a $q^{th}$ order ordinary differential equation (ODE) given by
\begin{eqnarray}
F\left(t, f_{\bm\theta}(t), \frac{df_{\bm\theta}(t)}{dt},\ldots, \frac{d^qf_{\bm\theta}(t)}{dt^q},\bm\theta\right)=0,\label{intro_hode}
\end{eqnarray}
where $F$ is a known sufficiently smooth real-valued function in its arguments which we shall refer to as the binding function. Higher order ODE models are encountered in different fields. For example, the system of ODE describing the concentrations of glucose and hormone in blood is
\begin{eqnarray}
\frac{dg(t)}{dt}&=&-m_1g(t)-m_2h(t)+J(t),\label{glucose}\\
\frac{dh(t)}{dt}&=&-m_3h(t)+m_4g(t),\label{hormone}
%\frac{d^2g(t)}{dt^2}+(\sin t+\theta_1)\frac{dg(t)}{dt}+g(t)(\cos t+\theta_1\sin t+\theta_2)=\theta_1t+1,\nonumber
\end{eqnarray}
where $g(t)$ and $h(t)$ denote the glucose and hormone concentrations at time $t$ respectively. Here the function $J(t)$ is known and $m_1, m_2, m_3$ and $m_4$ are unknown parameters. If we have only measurements on $g(t)$, we can differentiate both sides of \eqref{glucose} to obtain the second order ODE
\begin{eqnarray}
\frac{d^2g(t)}{dt^2}+2\alpha\frac{dg(t)}{dt}+\omega^2_0g(t)=S(t),\nonumber
\end{eqnarray}
where $\alpha=(m_1+m_3)/2$, $\omega^2_0=m_1m_3+m_2m_4$ and $S(t)=m_3J(t)+dJ(t)/dt$. Another popular example is the Van der Pol oscillator used in physical and biological sciences. The oscillator obeys the second order ODE $$\frac{d^2f_{\theta}(t)}{dt^2}-\theta(1-f^2_{\theta}(t))\frac{df_{\theta}(t)}{dt}+f_{\theta}(t)=0.$$

A related problem is a stochastic differential equation model where a signal is continuously observed in time with a noise process typically driven by a Brownian motion. \citet{bergstrom1983gaussian,bergstrom1985estimation,bergstrom1986estimation} used the maximum likelihood estimation (MLE) technique to estimate the parameters involved in  higher order stochastic differential equation given by $$\frac{d^qy(t)}{dt^q}=A_1(\bm\theta)\frac{d^{q-1}y(t)}{dt^{q-1}}+\cdots+A_{q-1}(\bm\theta)\frac{dy(t)}{dt}+A_qy(t)+b(\bm\theta)+z(t)+W(t),$$ where $A_j(\cdot), j=1,\ldots,q$ and $b(\cdot)$ are functions on $\bm\Theta$ and $W(\cdot)$ is the noise process \citep[page 342]{aarlin1981second}. Here $z(\cdot)$ is a non-random function. \citet{bergstrom1983gaussian} showed that the maximum likelihood estimator of $\bm\theta$ is asymptotically normal and asymptotically efficient. An efficient algorithm was given in \citet{bergstrom1985estimation} to compute the Gaussian likelihood for estimating the parameters involved in a non-stationary higher order stochastic ODE. Appropriate linear transformations are used in this algorithm to avoid the computation of the covariance matrix of the observations.

In this paper we develop three Bayesian approaches for inference on $\bm\theta$. In our first approach we use Runge-Kutta method to obtain an approximate solution $f_{\bm\theta,r_n}(\cdot)$ using $r_n$ grid points, $n$ being the number of observations and then construct an approximate likelihood and obtain the posterior distribution of $\bm\theta$, using the prior of $\bm\theta$. In another approach we assign prior on the coefficient vector $\bm\beta$ of the B-spline approximation $f(\cdot,\bm\beta)$ of the regression function. We define $\bm\theta$ as $\arg\min_{\bm\eta\in\bm\Theta}\int_0^1\left(f(t,\bm\beta)-f_{\bm\eta,r_n}(t)\right)^2g(t)dt$ and induce a posterior distribution of $\bm\theta$ using the posterior distribution of $\bm\beta$. Here $g(\cdot)$ is an appropriate weight function. The third approach is a generalization of the two-step approach. We use the B-spline approximation of the regression function and define $$\bm\theta=\arg\min_{\bm\eta\in\bm\Theta}\int_0^1\left|F\left(\cdot, f(\cdot,\bm\beta), \frac{df(t,\bm\beta)}{dt},\ldots, \frac{d^qf(t,\bm\beta)}{dt^q},\bm\eta\right)\right|^2w(t)dt,$$ where the weight function and its first $(q-1)$ derivatives vanish at $0$ and $1$. For the sake of simplicity we have assumed the regression function to be one dimensional. Extension to the multidimensional case where the binding function $F$ is also vector-valued, can be carried out similarly.

The rest of the paper is organized as follows. The notations are described in Section \ref{notation}. Section \ref{prelim_hode} contains some preliminaries of Runge-Kutta method. The model assumptions and prior specifications are given in Section \ref{model_hode}. Section \ref{method_hode} contains the descriptions of the estimation methods used. The main results are given in Section \ref{result_hode}. In Section \ref{sim_hode} we have carried out a simulation study. Proofs of the results are given in Section \ref{proof_hode}.

\section{Notations and preliminaries}\label{notation}
We describe a set of notations to be used in this paper. Boldfaced letters are used to denote vectors and matrices. For a matrix $\bm A$, the symbols $\bm A_{i,}$ and $\bm A_{,j}$ stand for the $i^{th}$ row and $j^{th}$ column of $\bm A$ respectively. The identity matrix of order $p$ is denoted by $\bm I_p$. We use the symbols $\mathrm{maxeig}(\bm A)$ and $\mathrm{mineig}(\bm A)$ to denote the maximum and minimum eigenvalues of the matrix $\bm A$ respectively.
The $L_2$ norm of a vector $\bm x\in\mathbb{R}^p$ is given by $\|\bm x\|=\left(\sum_{i=1}^px_i^2\right)^{1/2}$. For a function $\bm\phi(\cdot)$ and a vector $\bm x\in\mathbb{R}^p$, we denote $D_{\bm x}\bm\phi=\frac{\partial}{\partial\bm x}\bm\phi=\left(\frac{\partial\bm\phi}{\partial x_1},\ldots,\frac{\partial\bm\phi}{\partial x_p}\right).$ The notation $f^{(r)}(\cdot)$ stands for the $r^{th}$ order derivative of a function $f(\cdot)$, that is, $f^{(r)}(t)=\frac{d^r}{dt^r}f(t)$. For the function $\bm\theta\mapsto f_{\bm\theta}(\cdot)$, the notation $\dot{f}_{\bm\theta}(\cdot)$ implies $\frac{\partial}{\partial\bm\theta}f_{\bm\theta}(\cdot)$. Similarly, we denote $\ddot{f}_{\bm\theta}(\cdot)=\frac{\partial^2}{\partial\bm\theta^2}f_{\bm\theta}(\cdot)$. A vector valued function is represented by the boldfaced symbol $\bm f(\cdot)$. Let us define $\|\bm f\|_g=(\int_0^1\|\bm f(t)\|^2g(t)dt)^{1/2}$ for $\bm f:[0,1]\mapsto\mathbb{R}^p$ and $g:[0,1]\mapsto[0,\infty)$. The weighted inner product $\int_0^1\bm f^T_1(t)\bm f_2(t)g(t)dt$ of two vector-valued functions $\bm f_1(\cdot)$ and $\bm f_2(\cdot)$ with the corresponding weight function $g(\cdot)$ is denoted by $\langle\bm f_1,\bm f_2\rangle_g$. For numerical sequences $a_n$ and $b_n$, $a_n=o(b_n)$, $b_n\gg a_n$ and $a_n\ll b_n$ all mean $a_n/b_n\rightarrow0$ as $n\rightarrow\infty$. The notations $a_n=O(b_n)$, $a_n\lesssim b_n$ are used to indicate that $a_n/b_n$ is bounded, and $a_n\asymp b_n$ refers to the occurrence of both $a_n=O(b_n)$ and $b_n=O(a_n)$. The symbol $o_P(1)$ stands for a sequence of random variables converging in $P$-probability to zero, whereas $O_P(1)$ stands for a stochastically bounded sequence of random variables. Given a sample $\{ X_i:\,i=1,\ldots,n\}$ and a measurable function $\psi(\cdot)$, we define $\mathbb{P}_n\psi=n^{-1}\sum_{i=1}^n\psi(X_i)$. The symbols $\E(\cdot)$ and $\Var(\cdot)$ stand for the mean and variance respectively of a random variable. We use the notation $\mathbb{G}_n\psi$ to denote $\sqrt{n}\left(\mathbb{P}_n\psi-\E\psi\right)$. The total variation distance between the probability measures $P$ and $Q$ defined on $\mathbb{R}^p$ is given by $\|P-Q\|_{TV}=\sup_{B\in\mathscr{R}^p}|P(B)-Q(B)|$, $\mathscr{R}^p$ being the Borel $\sigma$-field on $\mathbb R^p$. Given a set $E$, the symbol $C^m(E)$ stands for the class of functions defined on $E$ having first $m$ continuous partial derivatives with respect to its arguments on some open set containing $E$. For a set $A$, the notation $\mathrm{l}\!\!\!1\{A\}$ stands for the indicator function for belonging to $A$. The symbol $:=$ means equality by definition.

\section{Preliminaries of Runge-Kutta method for higher order ODE}\label{prelim_hode}
Often the differential equation has the form
\begin{eqnarray}
F\left(t, f_{\bm\theta}(t), f^{(1)}_{\bm\theta}(t),\ldots, f^{(q)}_{\bm\theta}(t),\bm\theta\right)=f^{(q)}_{\bm\theta}(t)-H\left(t, f_{\bm\theta}(t), f^{(1)}_{\bm\theta}(t),\ldots, f^{(q-1)}_{\bm\theta}(t),\bm\theta\right)=0\nonumber
\end{eqnarray}
with initial conditions $f^{(\nu)}_{\bm\theta}(0)=c_{\nu}$ for $\nu=0,\ldots,q-1$, $H$ being known. Note that $t$ can be treated as a state variable $\chi(t)=t$ which satisfies the $q^{th}$ order ODE $\chi^{(q)}(t)=0$ with initial conditions $\chi(0)=0,\,\,\chi^{(1)}(0)=1$ and $\chi^{(j)}(0)=0$ for $j=2,\ldots,q-1$. Denoting $\bm\psi_{\bm\theta}(\cdot)=(f_{\bm\theta}(\cdot),\chi(\cdot))$, we can rewrite the ODE as $$\bm\psi^{(q)}_{\bm\theta}(t)=\bm H\left(\bm\psi_{\bm\theta}(t),\ldots,\bm\psi^{(q-1)}_{\bm\theta}(t)\right),$$ where $\bm H=(H(\cdot),0)$. Given $r_n$ equispaced grid points $a_1=0,a_2,\ldots,a_{r_n}$ with common difference $r^{-1}_n$, the approximate solution to \eqref{intro_hode} is given by $\bm\psi_{\theta,r_n}(\cdot)=(f_{\bm\theta,r_n}(\cdot),\chi_{r_n}(\cdot))$, where $r_n$ is chosen so that $r_n\gg\sqrt{n}$. Here $n$ denotes the number of observations. Let $\bm z_k=(\bm\psi_{\bm\theta,r_n}(a_k),  \bm\psi^{(1)}_{\bm\theta,r_n}(a_k),\ldots, \bm\psi^{(q-1)}_{\bm\theta,r_n}(a_k))$ stand for the vector formed by the function $\psi_{\bm\theta,r_n}$ and its $(q-1)$ derivatives at the $k^{th}$ grid point $a_k$ for $k=1,\ldots,r_n$. For $\nu=1,2,\ldots,q-1$ we define
\begin{eqnarray}
\bm T^\nu(a_k,\bm z_k,r_n)&:=&\bm\psi_{\bm\theta,r_n}^{(\nu)}(a_k)+\frac{1}{2!r_n} \bm\psi_{\bm\theta,r_n}^{(\nu+1)}(a_k)+\cdots+\frac{1}{r_n^{(q-\nu-1)}(q-\nu)!} \bm\psi_{\bm\theta,r_n}^{(q-1)}(a_k),\nonumber\\
\bm T^q(a_k,\bm z_k,r_n)&:=&\bm 0.\nonumber
\end{eqnarray}
Now let $\bm k_{\rho}(a_k):=\bm H(\bm U^1,\bm U^2,\ldots,\bm U^q)$ with $\bm U^1,\bm U^2,\ldots,\bm U^q$ being given in Table \ref{arg_table}.
$$\rm{``Table\,\ref{arg_table}\,here"}$$
Following equation $(4.16)$ of \citet[page 169]{henrici1962discrete} we define
\begin{eqnarray}
\bm\Phi^\nu(a_k,\bm z_k,r_n):=\bm T^\nu(a_k,\bm z_k,r_n)+\frac{1}{r_n^{(q-\nu)}(q-\nu+1)!}\sum_{\rho=1}^4\gamma_{\nu\rho}\bm k_{\rho}(a_k),\nonumber
\end{eqnarray}
where the coefficients $\gamma_{\nu\rho}$ are given by
\begin{eqnarray}
\gamma_{\nu1}=\frac{(q-\nu+1)^2}{(q-\nu+2)(q-\nu+3)},\nonumber\\
\gamma_{\nu2}=\gamma_{\nu3}=\frac{2(q-\nu+1)}{(q-\nu+2)(q-\nu+3)},\nonumber\\
\gamma_{\nu4}=\frac{1-q+\nu}{(q-\nu+2)(q-\nu+3)}\nonumber
\end{eqnarray}
for $\nu=1,\ldots,q$. Then the sequence $z_k$, $k=1,\ldots,r_n$ can be constructed by the recurrence relation $$\bm z_{k+1}=\bm z_{k}+r^{-1}_n\left(\bm\Phi^1(a_k,\bm z_k,r_n),\ldots,\bm\Phi^q(a_k,\bm z_k,r_n)\right)^T.$$
By the proof of Theorem 4.2 of \citet[page 174]{henrici1962discrete}, we have
\begin{eqnarray}
\sup_{t\in[0,1]}\left|f_{\bm\theta}(t)-f_{\bm\theta,r_n}(t)\right|=O(r^{-1}_n),\,
\sup_{t\in[0,1]}\left|\frac{\partial}{\partial\bm\theta}f_{\bm\theta}(t)-\frac{\partial}{\partial\bm\theta}f_{\bm\theta,r_n}(t)\right|=O(r^{-1}_n).\label{rk4}
\end{eqnarray}

\section{Model description and prior specification}\label{model_hode}
Now we formally describe the model. The proposed model is given by
\begin{eqnarray}
Y_{i}=f_{\bm\theta}(X_i)+\varepsilon_{i},\,i=1,\ldots,n,\label{prop_hode}
\end{eqnarray}
where $\bm\theta\subseteq\bm\Theta$, which is a compact subset of $\mathbb{R}^p$. The function $f_{\bm\theta}(\cdot)$ is $q$ times differentiable on an open set containing $[0,1]$ and satisfies the system of ODE given by
\begin{eqnarray}
F\left(t,f_{\bm\theta}(t),f^{(1)}_{\bm\theta}(t),\ldots,f^{(q)}_{\bm\theta}(t),\bm\theta\right)=0.\label{diff_hode}
\end{eqnarray}
Let for a fixed $\bm\theta$, $F(\cdot,\cdot,\bm\theta)\in C^{m-q+1}((0,1)\times\mathbb{R}^{q+1})$ for some integer $m\geq q$. Then, by successive differentiation we have $f_{\bm\theta}\in C^m((0,1))$. We also assume that the function $\bm\theta\mapsto f_{\bm\theta}(\cdot)$ is two times continuously differentiable. The true regression function is given by $f_0(\cdot)$ which does not necessarily lie in $\{f_{\bm\theta}:\bm\theta\in\bm\Theta\}$. Moreover we assume that $f_0\in C^m([0,1])$.
Let $\varepsilon_{i}$ be independently and identically distributed with mean zero and finite moment generating function for $i=1,\ldots,n$. Let the common variance be $\sigma^2_0$. We use $N(0,\sigma^2)$ as the working model for the error, which may be different from the true distribution of the errors. We treat $\sigma^2$ as an unknown parameter and assign an inverse gamma prior on $\sigma^2$ with shape and scale parameters $a$ and $b$ respectively. Additionally it is assumed that $X_i\stackrel{iid}\sim G$ with density $g$.\\
\indent Let us denote $\bm Y=(Y_1,\ldots,Y_n)^T$ and $\bm X=(X_1,\ldots,X_n)^T$. The true joint distribution of $(X_i,\varepsilon_i)$ is denoted by $P_0$.

\section{Methodology}\label{method_hode}
Now we describe the three different approaches of inference on $\bm\theta$ used in this paper.
\subsection{Runge-Kutta Sieve Bayesian Method (RKSB)}
For RKSB we denote $\bm\gamma=(\bm\theta,\sigma^2)$. The approximate likelihood of the sample $\{(X_i,Y_{i,}):\,i=1,\ldots,n\}$ is given by $L^*_n({\bm\gamma})=\prod_{i=1}^np_{\bm\gamma,n}(X_i,Y_{i,})$,
where
\begin{eqnarray}
p_{\bm\gamma,n}(X_i, Y_i)=(\sqrt{2\pi}\sigma)^{-1}\exp\{-(2\sigma^2)^{-1}| Y_{i}- f_{\bm\theta,r_n}(X_i)|^2\}g(X_i).\label{approx}
\end{eqnarray}
We also denote
\begin{eqnarray}
p_{\bm\gamma}(X_i, Y_i)=(\sqrt{2\pi}\sigma)^{-1}\exp\{-(2\sigma^2)^{-1}|Y_{i}-f_{\bm\theta}(X_i)|^2\}g(X_i).\label{actual}
\end{eqnarray}
The true parameter $\bm\gamma_0:=(\bm\theta_0,\sigma^2_*)$ is defined as $\bm\gamma_0=\arg\max_{\bm\gamma\in\bm\Theta\times(0,\infty)}P_0\log p_{\bm\gamma}$, which takes into account the natural requirement that if errors are normally distributed and $f_{\bm\theta_0}(\cdot)$ is the true regression function, then $\bm\gamma_0=(\bm\theta_0,\sigma^2_0)$, where $\sigma^2_0$ is the true value of the error variance. We denote by $\ell_{\bm\gamma}$ and $\ell_{\bm\gamma,n}$ the log-likelihoods with respect to \eqref{actual} and \eqref{approx} respectively. If $\bm\gamma_0$ is the unique maximizer of $P_0\log p_{\bm\gamma}$, we get
\begin{eqnarray}
\int_0^1\dot{f}^T_{\bm\theta_0}(t)\left(f_0(t)-f_{\bm\theta_0}(t)\right)g(t)dt=\bm 0,\,
\sigma^2_*=\sigma^2_0+\int_0^1|f_0(t)-f_{\bm\theta_0}(t)|^2g(t)dt.\label{deriv_10_hode}
%\int_0^1\left(\dot{\bm f}^T_{\bm\theta_0}(t)\dot{\bm f}_{\bm\theta_0}(t)-\frac{\partial}{\partial\bm\theta}\left(\dot{\bm f}^T_{\bm\theta}(t)\left(\bm f_0(t)-\bm f_{\bm\theta_0}(t)\right)\right)|_{\bm\theta=\bm\theta_0}\right)dG(t)\,\text{is positive definite}.\label{secderivneg}
\end{eqnarray}
We assume that the sub-matrix of the Hessian matrix of $-P_0\log p_{\bm\gamma}$ at $\bm\gamma=\bm\gamma_0$ given by
\begin{eqnarray}
\int_0^1\left(\left.\dot{f}^T_{\bm\theta_0}(t)\dot{f}_{\bm\theta_0}(t)-\frac{\partial}{\partial\bm\theta}\left(\dot{ f}^T_{\bm\theta}(t)\left(f_0(t)-f_{\bm\theta_0}(t)\right)\right)\right|_{\bm\theta=\bm\theta_0}\right)g(t)dt
\end{eqnarray}
is positive definite. Note that this condition is automatically satisfied when $f_{\bm\theta_0}(\cdot)$ is the true regression function. The prior measure on $\bm\Theta$ is assumed to have a Lebesgue-density continuous and positive on a neighborhood of $\bm\theta_0$. The prior distribution of $\bm\theta$ is assumed to be independent of that of $\sigma^2$. The joint prior measure is denoted by $\Pi$ with corresponding density $\pi$. We obtain the posterior of $\bm\gamma$ using the approximate likelihood given by \eqref{approx}.
\subsection{Runge-Kutta Two-step Bayesian Method (RKTB)}
In the RKTB approach, the proposed model is embedded in the nonparametric regression model
\begin{eqnarray}
\bm Y=\bm X_n\bm\beta+\bm\varepsilon,\label{np_hode}
\end{eqnarray}
where $\bm X_n=(\!(N_{j}(X_i))\!)_{1\leq i\leq n,1\leq j\leq k_n+m-1}$, $\{N_{j}(\cdot)\}_{j=1}^{k_n+m-1}$ being the B-spline basis functions of order $m$ with $k_n-1$ interior knots, see Chapter IX of \citet{de1978practical}. We assume for a given $\sigma^2$
\begin{equation}
\bm\beta\sim N_{k_n+m-1}(\bm 0,\sigma^2n^{2}k^{-1}_n\bm I_{k_n+m-1}).\label{prior_hode}
\end{equation}
A simple calculation yields that the conditional posterior distribution of $\bm\beta$ given $\sigma^2$ is
\begin{eqnarray}
N_{k_n+m-1}\left({\left({\bm X^T_n\bm X_n}+n^{-2}k_n\bm I_{k_n+m-1}\right)}^{-1}{\bm X^T_n\bm Y},\sigma^2{\left({\bm X^T_n\bm X_n}+n^{-2}k_n\bm I_{k_n+m-1}\right)}^{-1}\right).\label{posterior_hode}
\end{eqnarray}
For a given parameter $\bm\eta\in\bm\Theta$ let
\begin{eqnarray}
R_{f,n}(\bm\eta)=\left\{\int_0^1|f(t,\bm\beta)-f_{\eta,r_n}(t)|^2g(t)dt\right\}^{1/2},\,
R_{f_0}(\bm\eta)=\left\{\int_0^1|f_0(t)-f_{\eta}(t)|^2g(t)dt\right\}^{1/2},\nonumber
\end{eqnarray}
where $f(t,\bm\beta)=\bm\beta^T\bm N(t)$ with $\bm N(\cdot)=(N_{1}(\cdot),\ldots,N_{k_n+m-1}(\cdot))^T$. Now we define the parameter $\bm\theta$ by $\bm\theta=\arg\min_{\bm\eta\in\bm\Theta}R_{ f,n}(\bm\eta)$ as any minimizer and induce posterior distribution on $\bm\Theta$ through the posterior of $\bm\beta$ given by \eqref{posterior_hode}. Thus we extend the definition of $\bm\theta$ beyond the differential equation model. Let us denote $\bm\theta_0=\arg\min_{\bm\eta\in\bm\Theta}R_{f_0}(\bm\eta)$. Note that in well-specified case when $f_{\bm\theta_0}(\cdot)$ is the true regression function with corresponding true parameter $\bm\theta_0$, then the minima is automatically located at $\bm\theta_0$. We assume that
\begin{equation}
\text{for all}\,\epsilon>0,\,\,\inf_{\bm\eta:\left\|\bm\eta-\bm\theta_0\right\|\geq\epsilon}R_{f_0}(\bm\eta)>R_{f_0}(\bm\theta_0),\label{assmp_rktb_hode}
\end{equation}
that is, $R_{f_0}(\cdot)$ has a well separated unique minima at some point $\bm\theta_0$.
\subsection{Two-step Bayesian Method}
Here we use the same nonparametric model and prior specifications as in RKTB, but define the true parameter $\bm\theta_0$ as the unique minimizer of $\bm\eta\mapsto\|F(\cdot,f_0(\cdot),f^{(1)}_0(\cdot),\ldots,f^{(q)}_0(\cdot),\bm\eta)\|_w$, where $w(\cdot)$ is a nonnegative sufficiently smooth weight function and $w(\cdot)$ as well as its first $(q-1)$ derivatives vanish at $0$ and $1$. Here $\bm\theta$ is defined as
\begin{eqnarray}
\bm\theta=\arg\min_{\bm\eta\in\bm\Theta}\left\|F\left(\cdot,f(\cdot,\bm\beta),f^{(1)}(\cdot,\bm\beta),\ldots,f^{(q)}(\cdot,\bm\beta),\bm\eta\right)\right\|_w,\nonumber
\end{eqnarray}
where $f(\cdot,\bm\beta)=\bm\beta^T\bm N(\cdot)$ and $f^{(r)}(t,\bm\beta)=\frac{d^r}{dt^r}f(t,\bm\beta)$ for $r=1,\ldots,q$. We also assume that for all $\epsilon>0$
\begin{eqnarray}
%\inf_{\bm\eta:\norm{\bm\eta-\bm\theta_0}\geq\epsilon}\norm{F\left(\cdot,f_0(\cdot),f_0^{(1)}(\cdot),\ldots,f_0^{(q)}(\cdot),\bm\eta\right)}_w
\inf_{\bm\eta:\norm{\bm\eta-\bm\theta_0}\geq\epsilon}\norm{F\left(\cdot,f_0(\cdot),f_0^{(1)}(\cdot),\ldots,f_0^{(q)}(\cdot),\bm\eta\right)}_w
>\norm{F\left(\cdot,f_0(\cdot),f_0^{(1)}(\cdot),\ldots,f_0^{(q)}(\cdot),\bm\theta_0\right)}_w\label{assmp_ts_hode}
\end{eqnarray}
which implies that $\bm\theta_0$ is a well separated point of minima of $\norm{F\left(\cdot,f_0(\cdot),f_0^{(1)}(\cdot),\ldots,f_0^{(q)}(\cdot),\bm\eta\right)}_w$.
\section{Main results}\label{result_hode}
Now we state the theoretical results corresponding to each of the three approaches.%the next theorem which is a little variation of Theorem 2.1 of \citet{kleijn2012bernstein}
\subsection{RKSB}
\begin{theorem}\label{bvm_rksb_hode}
Let the posterior probability measure related to RKSB be denoted by $\Pi_n$. Then posterior of $\bm\gamma$ contracts at $\bm\gamma_0$ at the rate $n^{-1/2}$ and
\begin{eqnarray}
\norm{\Pi_n\left(\sqrt{n}(\bm\gamma-\bm\gamma_0)\in\cdot|\bm X,\bm Y\right)-\bm N(\bm\mu_n,\bm\Sigma)}_{TV}\stackrel{P_0}\rightarrow0\nonumber
\end{eqnarray}
where $\bm\Sigma=\left({\begin{array}{cc}{\sigma^{2}_*}{\bm V^{-1}_{\bm\theta_0}}&\bm 0\\\bm 0&{2\sigma^{4}_*}\end{array}}\right)$ with
\begin{eqnarray}
\bm V_{\bm\theta_0}=\int_0^1\left(\dot{f}^T_{\bm\theta_0}(t)\dot{f}_{\bm\theta_0}(t)-\frac{\partial}{\partial\bm\theta}\left.\left(\dot{ f}^T_{\bm\theta}(t)\left(f_0(t)-f_{\bm\theta_0}(t)\right)\right)\right|_{\bm\theta=\bm\theta_0}\right)g(t)dt\nonumber
\end{eqnarray}
and $\bm\mu_n=\bm\Sigma\mathbb{G}_n\dot\ell_{\bm\gamma_0,n}$.
\end{theorem}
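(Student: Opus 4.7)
The plan is to follow a misspecified Bernstein--von Mises strategy in the spirit of Kleijn and van der Vaart, with one extra layer that controls the Runge--Kutta approximation error uniformly on a $\sqrt{n}$-shrinking neighborhood of $\bm\gamma_0$. Since the error distribution is not assumed to be normal, $\bm\gamma_0$ is the Kullback--Leibler projection of $P_0$ onto the working family, and the two first-order conditions in \eqref{deriv_10_hode} hold at $\bm\gamma_0$. I will work throughout with the approximate score $\dot\ell_{\bm\gamma,n}$ and Hessian $\ddot\ell_{\bm\gamma,n}$ built from $p_{\bm\gamma,n}$, and compare them to the analogous objects $\dot\ell_{\bm\gamma}$, $\ddot\ell_{\bm\gamma}$ associated with the non-discretized density $p_{\bm\gamma}$ of \eqref{actual}. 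By \eqref{rk4}, the differences $p_{\bm\gamma,n}-p_{\bm\gamma}$ and $\dot p_{\bm\gamma,n}-\dot p_{\bm\gamma}$ are $O(r_n^{-1})$ uniformly in $\bm\gamma$ on a neighborhood of $\bm\gamma_0$, and this is what makes the Runge--Kutta step negligible at the $\sqrt{n}$ scale because $r_n\gg\sqrt{n}$.

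First I would establish posterior contraction at the $n^{-1/2}$ rate, i.e.\ $\Pi_n(\|\bm\gamma-\bm\gamma_0\|>M_n/\sqrt n\mid\bm X,\bm Y)\to0$ in $P_0$-probability for any $M_n\to\infty$. The compactness of $\bm\Theta$, together with the positivity of the prior density at $\bm\gamma_0$, reduces this to a standard testing argument: on the complement $\{\|\bm\gamma-\bm\gamma_0\|>\epsilon\}$, uniqueness of $\bm\gamma_0$ as the maximizer of $P_0\log p_{\bm\gamma}$ gives strictly positive KL separation, and on the intermediate shell one uses the positive definiteness of the Hessian \eqref{deriv_10_hode} to get quadratic separation. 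Exponential tests of $\bm\gamma_0$ against shells of radius $j/\sqrt n$ in the approximate family exist by a standard $\chi^2$ construction because the sub-Gaussian error tails give a finite moment generating function; the Runge--Kutta discrepancy $O(r_n^{-1})=o(n^{-1/2})$ is absorbed into the test constants.

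Next I would derive the LAN expansion for the approximate log-likelihood: uniformly for $\bm\gamma=\bm\gamma_0+\bm h/\sqrt n$ with $\|\bm h\|\le M_n$,
\begin{eqnarray}
\sum_{i=1}^n\log\frac{p_{\bm\gamma,n}(X_i,Y_i)}{p_{\bm\gamma_0,n}(X_i,Y_i)}=\bm h^T\mathbb{G}_n\dot\ell_{\bm\gamma_0,n}-\tfrac12\bm h^T\bm\Sigma^{-1}\bm h+o_{P_0}(1).\nonumber
\end{eqnarray}
The leading linear term comes from Taylor expansion of $\ell_{\bm\gamma,n}$, after noting that $P_0\dot\ell_{\bm\gamma_0,n}=P_0\dot\ell_{\bm\gamma_0}+O(r_n^{-1})=o(n^{-1/2})$ by \eqref{deriv_10_hode} and \eqref{rk4}, so $\sum_i\dot\ell_{\bm\gamma_0,n}(X_i,Y_i)/\sqrt n$ equals $\mathbb{G}_n\dot\ell_{\bm\gamma_0,n}$ up to $o_{P_0}(1)$. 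The quadratic term uses a uniform law of large numbers for $\mathbb{P}_n\ddot\ell_{\bm\gamma,n}$ over the $\sqrt n$-ball, which I would get from a standard bracketing argument exploiting the $C^2$ dependence of $f_{\bm\theta,r_n}$ on $\bm\theta$; the limit is $-\bm\Sigma^{-1}$, where the block-diagonal form follows because the mixed $\bm\theta$-$\sigma^2$ score at $\bm\gamma_0$ integrates to zero by the orthogonality in \eqref{deriv_10_hode}, and the $\sigma^2$ entry $1/(2\sigma_*^4)$ is the usual Gaussian Fisher information for a scale parameter.

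Combining the LAN expansion with the prior continuity at $\bm\gamma_0$ and the consistency above, the rescaled posterior density of $\bm h=\sqrt n(\bm\gamma-\bm\gamma_0)$ becomes proportional to $\exp(\bm h^T\bm\Sigma^{-1}\bm\mu_n-\tfrac12(\bm h-\bm\mu_n)^T\bm\Sigma^{-1}(\bm h-\bm\mu_n))$ up to $o_{P_0}(1)$ multiplicative errors, and the conclusion in total variation follows from a Scheff\'e-type argument as in the proof of the Kleijn--van der Vaart misspecified BvM theorem. The main obstacle I expect is making the approximation-error bookkeeping uniform on the $\sqrt n$-shrinking neighborhood: one has to verify, using \eqref{rk4} together with differentiability of $\bm\theta\mapsto f_{\bm\theta}$, that replacing $f_{\bm\theta}$ by $f_{\bm\theta,r_n}$ perturbs $\sum_i\dot\ell_{\bm\gamma,n}(X_i,Y_i)$ by $O_{P_0}(\sqrt n\,r_n^{-1})=o_{P_0}(1)$ and $\sum_i\ddot\ell_{\bm\gamma,n}(X_i,Y_i)$ by $O_{P_0}(n r_n^{-1})$, the latter being absorbed in the $\sqrt n$-local quadratic form since its contribution there is $O_{P_0}(r_n^{-1})=o_{P_0}(1)$. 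Once this is checked, everything else is a direct transcription of the standard BvM machinery.
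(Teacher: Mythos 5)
Your proposal follows essentially the same route as the paper: both rest on Theorem 2.1 of Kleijn and van der Vaart's misspecified Bernstein--von Mises theorem, verified by (i) a stochastic LAN expansion of the approximate log-likelihood around $\bm\gamma_0$ with the Runge--Kutta discrepancy rendered negligible by $r_n\gg\sqrt n$, and (ii) $\sqrt n$-rate posterior contraction via tests, with the limiting precision matrix $\bm\Sigma^{-1}$ and centering $\bm\mu_n=\bm\Sigma\mathbb{G}_n\dot\ell_{\bm\gamma_0,n}$ identified exactly as you describe (the paper delegates these verifications to Lemmas 2--5 of its companion paper rather than spelling them out). The one step you omit is the paper's preliminary reduction to a compact set for the variance: since $\sigma^2$ ranges over all of $(0,\infty)$, the uniform Kullback--Leibler separation and the existence of exponentially consistent tests that your ``standard testing argument'' invokes are not automatic near $\sigma^2\to 0$ or $\sigma^2\to\infty$; the paper first shows $\Pi_n(\sigma^2\in U\mid\bm X,\bm Y)\to 1$ for some compact $U\subset(0,\infty)$, proves the BvM for the conditioned posterior $\Pi_{U,n}$, and then transfers it back using $\norm{\Pi_n-\Pi_{U,n}}_{TV}=o_{P_0}(1)$. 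You should add this compactification step (or otherwise justify uniformity of the tests over the unbounded $\sigma^2$-range); with it in place your argument matches the paper's.
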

Since $\bm\theta$ is a sub-vector of $\bm\gamma$, we get the Bernstein-von Mises Theorem for the posterior distribution of $\sqrt{n}(\bm\theta-\bm\theta_0)$, the mean and dispersion matrix of the limiting Gaussian distribution being the corresponding sub-vector and sub-matrix of $\bm\mu_n$ and $\bm\Sigma$ respectively. We also get the following important corollary.
\begin{corollary}\label{eff_rksb_hode}
When the regression model \eqref{prop_hode} is correctly specified and also the true distribution of error is Gaussian, the Bayes estimator based on $\Pi_n$ is asymptotically efficient.
\begin{remark}
\rm{We can get similar results when the covariates are deterministic under the criteria that $$\sup_{t\in[0,1]}\left|Q_n(t)-Q(t)\right|=o(n^{-1/2}),$$ where $Q_n(\cdot)$ is the empirical distribution function of the covariate sample and $Q(\cdot)$ is a distribution function with positive pdf on [0,1].}
\end{remark}
\end{corollary}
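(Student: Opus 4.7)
The plan is to derive Corollary \ref{eff_rksb_hode} as an immediate consequence of Theorem \ref{bvm_rksb_hode} by verifying two facts: under correct Gaussian specification, the limiting covariance $\bm\Sigma$ coincides with the inverse Fisher information, and the BvM centering reproduces the efficient influence function.

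First, I would simplify $\bm\Sigma$ under correct specification. Since $f_0=f_{\bm\theta_0}$, the residual $f_0(t)-f_{\bm\theta_0}(t)$ is identically zero, so from \eqref{deriv_10_hode} we obtain $\sigma^2_*=\sigma^2_0$, and the second term inside the integrand of $\bm V_{\bm\theta_0}$ vanishes, leaving $\bm V_{\bm\theta_0}=\int_0^1\dot f^T_{\bm\theta_0}(t)\dot f_{\bm\theta_0}(t)g(t)dt$. Next, I would compute the Fisher information of the Gaussian regression model $Y=f_{\bm\theta}(X)+\varepsilon$ with $X\sim g$, $\varepsilon\sim N(0,\sigma^2)$ by straightforward differentiation. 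The score with respect to $\bm\theta$ is $\dot f_{\bm\theta_0}(X)\varepsilon/\sigma_0^2$ and the score with respect to $\sigma^2$ is $-1/(2\sigma_0^2)+\varepsilon^2/(2\sigma_0^4)$. Using $\mathrm E[\varepsilon]=\mathrm E[\varepsilon^3]=0$ and $\mathrm E[\varepsilon^4]=3\sigma_0^4$, the Fisher information matrix is block-diagonal with blocks $\sigma_0^{-2}\bm V_{\bm\theta_0}$ and $1/(2\sigma_0^4)$. Its inverse coincides exactly with $\bm\Sigma$.

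Second, I would upgrade the BvM result to efficiency of the posterior mean $\hat{\bm\gamma}_n$, which is the Bayes estimator under squared-error loss. The Runge-Kutta bound \eqref{rk4} together with $r_n\gg\sqrt n$ implies $\mathbb G_n\dot\ell_{\bm\gamma_0,n}=\mathbb G_n\dot\ell_{\bm\gamma_0}+o_{P_0}(1)$, so the BvM centering $\bm\mu_n=\bm\Sigma\mathbb G_n\dot\ell_{\bm\gamma_0,n}$ agrees asymptotically with $\bm\Sigma\mathbb G_n\dot\ell_{\bm\gamma_0}$. Combining Theorem \ref{bvm_rksb_hode} with a uniform integrability argument that allows passage from total-variation convergence of the rescaled posterior to convergence of its first moment, we obtain
$$\sqrt n(\hat{\bm\gamma}_n-\bm\gamma_0)=\bm\Sigma\mathbb G_n\dot\ell_{\bm\gamma_0}+o_{P_0}(1).$$
Since $\bm\Sigma$ equals the inverse Fisher information, the right-hand side is the inverse Fisher information times the score, i.e., the efficient influence function. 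Thus $\hat{\bm\gamma}_n$ is asymptotically linear with the efficient influence function and achieves the Cramer-Rao bound, which is precisely asymptotic efficiency.

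The main obstacle is the uniform integrability step needed to pass from the total-variation BvM conclusion of Theorem \ref{bvm_rksb_hode} to the convergence of the posterior mean in $L^1$. Total-variation convergence does not automatically imply moment convergence, so one has to combine the BvM statement with tail control on the rescaled posterior; such control is standard and typically inherited from the exponential tail bounds already used to establish Theorem \ref{bvm_rksb_hode} (truncate outside an $n^{-1/2}\log n$ neighborhood of $\bm\gamma_0$ and bound the contribution of the complement). The reduction of $\bm V_{\bm\theta_0}$, the Fisher information computation, and the score-replacement via \eqref{rk4} are routine algebra.
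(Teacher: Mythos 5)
Your proposal is correct and follows essentially the same route as the paper: the core of the paper's proof is exactly your first step, namely computing the scores and the block-diagonal Fisher information $\bm I(\bm\gamma_0)$ of the correctly specified Gaussian model and observing that $\bm\Sigma=(\bm I(\bm\gamma_0))^{-1}$ once $f_0=f_{\bm\theta_0}$ forces $\sigma^2_*=\sigma^2_0$ and kills the second term of $\bm V_{\bm\theta_0}$. Your additional material on replacing $\dot\ell_{\bm\gamma_0,n}$ by $\dot\ell_{\bm\gamma_0}$ via \eqref{rk4} and on the uniform-integrability step needed to pass from the total-variation Bernstein--von Mises statement to efficiency of the posterior mean is a sound elaboration of details the paper leaves implicit, not a different approach.
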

\subsection{RKTB}
In RKTB we assume that the matrix
\begin{eqnarray}
\bm J({\bm\theta_0})=-\int_0^1\ddot f_{\bm\theta_0}(t)(f_0(t)- f_{\bm\theta_0}(t))g(t)dt
+\int_0^1\left(\dot f_{\bm\theta_0}(t)\right)^T\left(\dot f_{\bm\theta_0}(t)\right)g(t)dt\nonumber
\end{eqnarray}
is nonsingular. Note that in the well-specified case the first term vanishes and hence $\bm J(\bm\theta_0)$ equals the second term which is positive definite. Let us denote $\bm C(t)=\left(\bm J({\bm\theta_0})\right)^{-1}\left(\dot f_{\bm\theta_0}(t)\right)^T$ and $\bm G^T_n=\int_0^1\bm C(t)\bm N^T(t)g(t)dt$. Also, we denote the posterior probability measure of RKTB by $\Pi_n^*$. Now we have the following result.

\begin{theorem}\label{bvm_rktb_hode}
Let
\begin{eqnarray}
\bm\mu_n^*&=&\sqrt{n}\bm G_{n}^T\left(\bm X^T_n\bm X_n\right)^{-1}\bm X^T_n\bm Y-\sqrt{n}\left(\bm J({\bm\theta_0})\right)^{-1}\int_0^1\left(\dot f_{\bm\theta_0}(t)\right)^Tf_0(t)g(t),\nonumber\\
\bm\Sigma_n^*&=&n\bm G_{n}^T\left(\bm X^T_n\bm X_n\right)^{-1}\bm G_{n},\nonumber\\
\bm B&=&\left(\!\!\left(\left<C_{k}(\cdot),C_{k'}(\cdot)\right>_g\right)\!\!\right)_{k,k'=1,\ldots,p}.\nonumber
\end{eqnarray}
If $\bm B$ is non-singular, then for $m\geq3$ and $n^{1/(2m)}\ll k_n\ll n^{1/4}$,
\begin{eqnarray}
\norm{\Pi^*_n\left(\sqrt{n}(\bm\theta-\bm\theta_0)\in\cdot|\bm X,\bm Y\right)-N\left(\bm\mu_n^*,\sigma^2_0\bm\Sigma_n^*\right)}_{TV}=o_{P_0}(1).\label{thm2_hode_rktb}
\end{eqnarray}
\end{theorem}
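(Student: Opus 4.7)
The plan is to exploit the fact that $\bm\theta$ is defined implicitly as the argmin of a criterion that is quadratic in $\bm\beta$, so that a Taylor expansion in $\bm\eta$ about $\bm\theta_0$ yields an explicit affine representation of $\bm\theta-\bm\theta_0$ as a linear functional of $\bm\beta$. The Gaussian conjugate posterior \eqref{posterior_hode} for $\bm\beta$ then transports exactly onto a Gaussian posterior for $\sqrt n(\bm\theta-\bm\theta_0)$.

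\textbf{Step 1 (Consistency).} Set $M_n(\bm\eta,\bm\beta)=R_{f,n}^2(\bm\eta)$. Standard posterior contraction for the B-spline prior under $f_0\in C^m$ with $n^{1/(2m)}\ll k_n$ yields $\|f(\cdot,\bm\beta)-f_0\|_g=o_{P_0}(1)$ in posterior probability. The RK4 bound \eqref{rk4} gives $\sup_{\bm\eta}\|f_{\bm\eta,r_n}-f_{\bm\eta}\|_\infty=o(1)$, whence $\sup_{\bm\eta}|M_n(\bm\eta,\bm\beta)-R_{f_0}^2(\bm\eta)|=o_{P_0}(1)$ in posterior probability. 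Combined with the well-separated minimum \eqref{assmp_rktb_hode}, the argmin continuous mapping theorem yields $\bm\theta\to\bm\theta_0$ in posterior probability.

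\textbf{Step 2 (Linearization via the first-order condition).} On the consistency event, $\bm\theta$ satisfies
\[
\int_0^1 \dot f_{\bm\theta,r_n}^T(t)\bigl(f(t,\bm\beta)-f_{\bm\theta,r_n}(t)\bigr)g(t)\,dt=\bm 0.
\]
Taylor expanding in $\bm\eta$ about $\bm\theta_0$ and invoking \eqref{rk4} with $r_n\gg\sqrt n$ replaces $f_{\bm\theta_0,r_n}$ and its derivatives by $f_{\bm\theta_0}$ and $\dot f_{\bm\theta_0}$ at cost $o(n^{-1/2})$. Since $f(\cdot,\bm\beta)$ is close to $f_0$ on the posterior bulk, the Hessian $\tfrac12\ddot M_n(\bm\theta^*,\bm\beta)$ converges in probability to $\bm J(\bm\theta_0)$, giving
\[
\bm\theta-\bm\theta_0=\bigl(\bm J(\bm\theta_0)\bigr)^{-1}\!\int_0^1\dot f_{\bm\theta_0}^T(t)\bigl(f(t,\bm\beta)-f_{\bm\theta_0}(t)\bigr)g(t)\,dt+o_{P_0}(n^{-1/2}).
\]
Using the population first-order condition $\int_0^1 \dot f_{\bm\theta_0}^T(t)(f_0(t)-f_{\bm\theta_0}(t))g(t)\,dt=\bm 0$ and the definition of $\bm G_n$, this recasts as
\[
\sqrt n(\bm\theta-\bm\theta_0)=\sqrt n\bm G_n^T\bm\beta-\sqrt n\bigl(\bm J(\bm\theta_0)\bigr)^{-1}\!\int_0^1\dot f_{\bm\theta_0}^T(t)f_0(t)g(t)\,dt+o_{P_0}(1).
\]

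\textbf{Step 3 (Transport the Gaussian posterior).} Given $\sigma^2$, $\bm\beta$ is exactly Gaussian by \eqref{posterior_hode}, hence $\sqrt n\bm G_n^T\bm\beta$ is exactly Gaussian with mean $\sqrt n\bm G_n^T(\bm X_n^T\bm X_n+n^{-2}k_n\bm I)^{-1}\bm X_n^T\bm Y$ and covariance $\sigma^2 n\bm G_n^T(\bm X_n^T\bm X_n+n^{-2}k_n\bm I)^{-1}\bm G_n$. Since the smallest eigenvalue of $\bm X_n^T\bm X_n$ is of order $n/k_n$ while the ridge penalty is $n^{-2}k_n$, the penalty can be dropped at $o(n^{-1/2})$ cost. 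The inverse-gamma posterior for $\sigma^2$ concentrates at $\sigma_0^2$ at rate $n^{-1/2}$, allowing replacement of $\sigma^2$ by $\sigma_0^2$ in the covariance. Combined with Step 2 and the affinity of the linearization, this yields total-variation convergence to $N(\bm\mu_n^*,\sigma_0^2\bm\Sigma_n^*)$.

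\textbf{Main obstacle.} The delicate step is controlling the linearization \emph{uniformly} over a posterior-typical neighborhood of $\bm\beta$: the Hessian must remain bounded and nonsingular, and the cubic Taylor remainder must be $o_{P_0}(n^{-1/2})$ when $\bm\beta$ varies over the posterior bulk. This requires the uniform $L_2(g)$ rate $\|f(\cdot,\bm\beta)-f_0\|_g=O_{P_0}(k_n^{-m}+\sqrt{k_n/n})$ for posterior draws, together with uniform control of $\ddot f_{\bm\eta}$ for $\bm\eta$ near $\bm\theta_0$; the scaling $n^{1/(2m)}\ll k_n\ll n^{1/4}$ with $m\ge 3$ is precisely what makes the squared bias and squared variance contributions in the remainder both $o(n^{-1})$. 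Threading these uniform controls through the implicit argmin map and upgrading the convergence from weak to total variation is the principal technical work.
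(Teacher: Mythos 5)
Your proposal is correct and follows essentially the same route as the paper's proof: the paper likewise establishes posterior consistency of $\sigma^2$, linearizes $\sqrt{n}(\bm\theta-\bm\theta_0)$ as the affine functional $\sqrt{n}\bm G_n^T\bm\beta-\sqrt{n}(\bm J(\bm\theta_0))^{-1}\int_0^1(\dot f_{\bm\theta_0}(t))^Tf_0(t)g(t)\,dt$ of $\bm\beta$ on a set of high posterior probability (citing Lemma 9 of the companion paper, which is exactly your first-order-condition Taylor expansion), and then transports the conjugate Gaussian posterior of $\bm\beta$ through this linear map. Your sketch merely makes explicit the steps the paper delegates to Lemmas 9--11 and Theorem 4.2 of \citet{bhaumik2014efficient}.
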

\begin{remark}
\rm{Following the steps of the proof of Lemma 10 of \citet{bhaumik2014efficient} it can be proved that both $\bm\mu^*_n$ and $\bm\Sigma^*_n$ are stochastically bounded. Hence, with high true probability the posterior distribution of $\bm\theta$ contracts at $\bm\theta_0$ at $n^{-1/2}$ rate}.
\end{remark}
We also get the following important corollary.
\begin{corollary}\label{eff_rktb_hode}
When the regression model \eqref{prop_hode} is correctly specified and the true distribution of error is Gaussian, the Bayes estimator based on $\Pi^*_n$ is asymptotically efficient.
\end{corollary}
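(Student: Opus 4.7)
The plan is to read the Bayes estimator $\hat{\bm\theta}_B$ (taken to be the posterior mean) off Theorem~\ref{bvm_rktb_hode}: under the BvM in total variation, $\sqrt{n}(\hat{\bm\theta}_B - \bm\theta_0) = \bm\mu_n^* + o_{P_0}(1)$, so asymptotic efficiency reduces to checking that in the correctly specified Gaussian setting both $\bm\mu_n^*$ and $\sigma_0^2\bm\Sigma_n^*$ coincide, up to $o_{P_0}(1)$ remainders, with the efficient asymptotic mean and variance. Under $f_0 = f_{\bm\theta_0}$ the cross term in $\bm J(\bm\theta_0)$ vanishes, so $\bm J(\bm\theta_0) = \int_0^1 \dot{f}_{\bm\theta_0}^T \dot{f}_{\bm\theta_0}\, g\, dt$; with Gaussian errors the score equals $\dot{\ell}_{\bm\theta_0}(X,Y) = \sigma_0^{-2}\dot{f}_{\bm\theta_0}^T(X)(Y - f_{\bm\theta_0}(X))$, the Fisher information is $\bm I(\bm\theta_0) = \sigma_0^{-2}\bm J(\bm\theta_0)$, and the efficient limit for $\sqrt{n}(\hat{\bm\theta} - \bm\theta_0)$ is $N(\bm 0, \sigma_0^2\bm J(\bm\theta_0)^{-1})$. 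Hence the two items to verify are
\begin{eqnarray*}
\bm\mu_n^* &=& \bm J(\bm\theta_0)^{-1}\, n^{-1/2}\textstyle\sum_{i=1}^n \dot{f}_{\bm\theta_0}^T(X_i)\varepsilon_i + o_{P_0}(1),\\
\sigma_0^2\bm\Sigma_n^* &\stackrel{P_0}\rightarrow& \sigma_0^2\bm J(\bm\theta_0)^{-1}.
\end{eqnarray*}

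For both, set $\bm D_n = \int_0^1 \dot{f}_{\bm\theta_0}^T \bm N^T g\,dt$ and $\bm\Omega_n = \int_0^1 \bm N\bm N^T g\,dt$, so that $\bm G_n^T = \bm J(\bm\theta_0)^{-1}\bm D_n$. Using standard B-spline concentration to replace $n^{-1}\bm X_n^T\bm X_n$ by $\bm\Omega_n$ after weighting by the minimum-eigenvalue scale $k_n^{-1}$, one obtains $n\bm G_n^T(\bm X_n^T\bm X_n)^{-1}\bm G_n = \bm J(\bm\theta_0)^{-1}\bm D_n\bm\Omega_n^{-1}\bm D_n^T\bm J(\bm\theta_0)^{-1} + o_{P_0}(1)$. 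I would then identify $\bm D_n\bm\Omega_n^{-1}\bm D_n^T = \int_0^1 \dot{f}_{\bm\theta_0}^T(\Pi_n \dot{f}_{\bm\theta_0})\,g\,dt$, where $\Pi_n$ is the $L_2(g)$-projection onto the spline space, and use $\norm{\Pi_n \dot{f}_{\bm\theta_0} - \dot{f}_{\bm\theta_0}}_\infty = O(k_n^{-m})$ to conclude $\bm\Sigma_n^* \rightarrow \bm J(\bm\theta_0)^{-1}$ in $P_0$-probability. For the centering, I decompose $f(t, \hat{\bm\beta}) = \tilde{f}_n(t) + \hat{e}_n(t)$ into the smoother applied to $f_{\bm\theta_0}(\bm X)$ and to $\bm\varepsilon$ separately. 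The bias piece $\sqrt{n}\int_0^1 \dot{f}_{\bm\theta_0}^T(\tilde{f}_n - f_{\bm\theta_0}) g\,dt$ is $O_{P_0}(\sqrt{n}\, k_n^{-m}) = o_{P_0}(1)$ because $k_n \gg n^{1/(2m)}$, while the noise piece equals $\bm D_n(\bm X_n^T\bm X_n)^{-1}\bm X_n^T\bm\varepsilon = n^{-1}\sum_{i=1}^n (\Pi_n \dot{f}_{\bm\theta_0}^T)(X_i) \varepsilon_i + o_{P_0}(n^{-1/2})$, after which replacing $\Pi_n \dot{f}_{\bm\theta_0}^T$ by $\dot{f}_{\bm\theta_0}^T$ costs only $o_{P_0}(n^{-1/2})$ thanks to the same uniform approximation bound and a Chebyshev argument.

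Combining these displays yields $\sqrt{n}(\hat{\bm\theta}_B - \bm\theta_0) = \bm I(\bm\theta_0)^{-1}\, n^{-1/2}\sum_{i=1}^n \dot{\ell}_{\bm\theta_0}(X_i, Y_i) + o_{P_0}(1)$, the efficient influence-function expansion, so the CLT delivers the asymptotic $N(\bm 0, \bm I(\bm\theta_0)^{-1})$ distribution and hence efficiency. The chief technical obstacle is the first displacement above: transferring from the empirical Gram matrix $n^{-1}\bm X_n^T\bm X_n$ to its $g$-weighted population analogue $\bm\Omega_n$ with a remainder small enough to survive $\sqrt{n}$-multiplication and contraction against $\dot{f}_{\bm\theta_0}^T$, despite the growing conditioning of both matrices as $k_n\to\infty$. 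This requires a careful B-spline concentration estimate tied to $\mathrm{mineig}(\bm\Omega_n)\asymp k_n^{-1}$, but the relevant lemmas of \citet{bhaumik2014efficient} should transfer essentially verbatim since the spline setup and the smoothness hypotheses on $f_{\bm\theta_0}$ are the same here as there.
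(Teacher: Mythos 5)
Your proposal is correct and follows essentially the same route as the paper: identify the Fisher information $\bm I(\bm\theta_0)=\sigma_0^{-2}\int_0^1\dot f^T_{\bm\theta_0}(t)\dot f_{\bm\theta_0}(t)g(t)dt$ in the correctly specified Gaussian model, note that the cross term in $\bm J(\bm\theta_0)$ vanishes, and show $\sigma^2_0\bm\Sigma_n^*\stackrel{P_0}\rightarrow\bm I(\bm\theta_0)^{-1}$ --- the paper does this last step simply by citing Lemma 10 of \citet{bhaumik2014efficient}, whereas you unpack the same spline-projection estimates explicitly. Your additional check that $\bm\mu_n^*$ reduces to the efficient influence-function average is a sound extra step that the paper leaves implicit in its appeal to the companion paper.
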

\begin{remark}
\rm{Similar results will follow for deterministic covariates provided that $$\sup_{t\in[0,1]}\left|Q_n(t)-Q(t)\right|=o(k_n^{-1}),$$ where $Q_n(\cdot)$ is the empirical distribution function of the covariate sample and $Q(\cdot)$ is a distribution function with positive pdf on [0,1]. Note that this condition holds with probability tending to one when the covariates are random.}
\end{remark}

\subsection{Two-step Bayesian Method}
We denote $\bm h(\cdot)=(f(\cdot,\bm\beta),f^{(1)}(\cdot,\bm\beta),\ldots,f^{(q)}(\cdot,\bm\beta))^T$ and $\bm h_0(\cdot)$ to be similar to $\bm h(\cdot)$ with $f$ being replaced by $f_0$. We denote $\bm G(t,\bm h(t),\bm\theta)=\left(D_{\bm\theta}F(t,\bm h(t),\bm\theta)\right)^TF(t,\bm h(t),\bm\theta)$. Before obtaining the Bernstein-von Mises Theorem for two-step Bayesian method, we use the following lemma to get an approximate linearization of $\sqrt{n}(\bm\theta-\bm\theta_0)$.
\begin{lemma}\label{linearize_ts_hode}
Let the matrix
\begin{eqnarray}
\bm M(\bm h_0,\bm\theta_0)=\int_0^1D_{\bm\theta_0}\left(\bm G(t,\bm h_0(t),\bm\theta_0)\right)w(t)dt\nonumber
\end{eqnarray}
be nonsingular. For $m>(2q+2)$, $n^{1/2m}\ll k_n\ll n^{1/(4q+4)}$ and under assumption \eqref{assmp_ts_hode}, there exists $E_n\subseteq\bm\Theta\times C^m((0,1))$ with $\Pi(E^c_n|\bm Y)=o_{P_0}(1)$, such that
\begin{eqnarray}
\sup_{(\bm\theta,\bm h)\in E_n}\norm{\sqrt{n}(\bm\theta-\bm\theta_0)-\left(\bm M(\bm h_0,\bm\theta_0)\right)^{-1}\sqrt{n}(\bm\Gamma(f)-\bm\Gamma(f_{0}))}\rightarrow0,\label{thm1_hode_ts}
\end{eqnarray}
where $\bm\Gamma(z):=-\sum_{r=0}^q\int_0^1(-1)^{r}\frac{d^r}{dt^r}\left[D_{\bm h_0}\left(\bm G(t,\bm h_0(t),\bm\theta_0)\right)w(t)\right]_{,r}z(t)dt$  is a linear functional of $z(\cdot)$ for any function $z:[0,1]\mapsto\mathbb{R}$.%\begin{align}
%\bm\Gamma(z)=&\int_0^1 \left(-(D_{0,0,1} F(\bm t, \bm g_0(\bm t),\bm\theta_0))^TD_{0,1,0} F(\bm t, \bm g_0(\bm t),\bm\theta_0)w(\bm t)\right.\nonumber\\
%&\left.-\frac{\partial}{\partial t}[(D_{0,0,1} F(\bm t, \bm g_0(\bm t),\bm\theta_0))^Tw(\bm t)]+\left(D_{0,1,0}\bm S(\bm t, \bm g_0(\bm t),\bm\theta_0)\right)w(\bm t)\right)z(\bm t)d\bm t.\nonumber
%\end{align}
\end{lemma}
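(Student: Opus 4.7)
The plan is to linearize the first-order optimality conditions that define $\bm\theta$ and $\bm\theta_0$ via a second-order Taylor expansion and then convert the linear part in $\bm h-\bm h_0$ into $\bm\Gamma(f)-\bm\Gamma(f_0)$ by integration by parts, exploiting the boundary vanishing of $w$ and its first $q-1$ derivatives.

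First I would construct $E_n$ as the intersection of high-posterior-probability events on which (i) derivative-by-derivative B-spline contraction rates hold, namely $\sup_{t\in[0,1]}|f^{(r)}(t,\bm\beta)-f_0^{(r)}(t)|\le\epsilon_{n,r}$ with $\epsilon_{n,r}\lesssim k_n^{r+1/2}n^{-1/2}+k_n^{-(m-r)}$ for $r=0,1,\ldots,q$, following from the explicit Gaussian posterior \eqref{posterior_hode} together with standard B-spline approximation bounds for functions in $C^m([0,1])$; and (ii) a preliminary rate $\|\bm\theta-\bm\theta_0\|=o(n^{-1/4})$. For (ii), I would first invoke \eqref{assmp_ts_hode} and uniform convergence of $\bm\eta\mapsto\|F(\cdot,\bm h(\cdot),\bm\eta)\|_w$ over $\bm h$ in a $C^q$-neighborhood of $\bm h_0$ to deduce posterior consistency of $\bm\theta$, and then bootstrap the linearization described below once. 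Under the hypotheses $m>2q+2$ and $n^{1/(2m)}\ll k_n\ll n^{1/(4q+4)}$ one checks that $\epsilon_{n,r}^2=o(n^{-1/2})$ uniformly in $r\le q$.

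Since $\bm\theta$ is a minimizer, it satisfies $\int_0^1\bm G(t,\bm h(t),\bm\theta)w(t)dt=\bm 0$, and the analogous identity holds at $(\bm h_0,\bm\theta_0)$. Subtracting and Taylor-expanding $\bm G$ to second order about $(\bm h_0(t),\bm\theta_0)$ gives
\[\bm M(\bm h_0,\bm\theta_0)(\bm\theta-\bm\theta_0)+\int_0^1 D_{\bm h_0}\bm G(t,\bm h_0(t),\bm\theta_0)\bigl(\bm h(t)-\bm h_0(t)\bigr)w(t)dt+R_n=\bm 0,\]
where $R_n$ collects the second-order remainder. In the middle term I would write the matrix-vector product as a sum over the $q+1$ components of $\bm h-\bm h_0=(f-f_0,\ldots,f^{(q)}-f_0^{(q)})^T$ and integrate by parts $r$ times on the $r$-th summand. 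Because $w^{(j)}(0)=w^{(j)}(1)=0$ for $j=0,\ldots,q-1$, all boundary contributions vanish and what remains is exactly $-(\bm\Gamma(f)-\bm\Gamma(f_0))$ in the notation of the lemma. Multiplying through by $\sqrt{n}(\bm M(\bm h_0,\bm\theta_0))^{-1}$ produces the identity \eqref{thm1_hode_ts} modulo the error term $\sqrt{n}(\bm M(\bm h_0,\bm\theta_0))^{-1}R_n$.

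The main obstacle is showing that $\sqrt{n}R_n=o(1)$ uniformly on $E_n$. The remainder decomposes into terms of the form $(\bm\theta-\bm\theta_0)_j(\bm\theta-\bm\theta_0)_k$, $(\bm\theta-\bm\theta_0)_j(f^{(r)}-f_0^{(r)})$, and $(f^{(r)}-f_0^{(r)})(f^{(s)}-f_0^{(s)})$ for $r,s\in\{0,\ldots,q\}$, integrated against smooth coefficients times $w(t)$. These are bounded respectively by $\|\bm\theta-\bm\theta_0\|^2$, $\|\bm\theta-\bm\theta_0\|\epsilon_{n,r}$ and $\epsilon_{n,r}\epsilon_{n,s}$. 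The worst case is $r=s=q$, where the posterior variance of $f^{(q)}$ at a point grows like $k_n^{2q+1}/n$; the upper bound $k_n\ll n^{1/(4q+4)}$ is precisely what forces $\epsilon_{n,q}^2=o(n^{-1/2})$ uniformly on $E_n$, while $k_n\gg n^{1/(2m)}$ together with $m>2q+2$ handles the bias part of every $\epsilon_{n,r}$. Non-singularity of $\bm M(\bm h_0,\bm\theta_0)$ absorbs its inverse, yielding the stated uniform convergence.
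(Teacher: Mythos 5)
Your proposal is correct and follows essentially the same route as the paper: subtract the two first-order optimality conditions $\int_0^1\bm G(t,\bm h(t),\bm\theta)w(t)dt=\bm 0$ and $\int_0^1\bm G(t,\bm h_0(t),\bm\theta_0)w(t)dt=\bm 0$, Taylor-expand to isolate $\bm M(\bm h_0,\bm\theta_0)(\bm\theta-\bm\theta_0)$ plus a linear term in $\bm h-\bm h_0$ and a quadratic remainder, convert the linear term to $\bm\Gamma(f)-\bm\Gamma(f_0)$ by integrating by parts $r$ times using the vanishing of $w,\ldots,w^{(q-1)}$ at the endpoints, and kill $\sqrt{n}$ times the remainder on a high-posterior-probability set via posterior consistency and spline contraction rates. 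The only difference is bookkeeping: you make the derivative-wise rates $\epsilon_{n,r}$ and the bootstrap for $\|\bm\theta-\bm\theta_0\|$ explicit, where the paper cites Lemmas 2--4 of \citet{bhaumik2014bayesiant} and absorbs the $O(\|\bm\theta-\bm\theta_0\|^2)$ term into $\bm M(\bm h_0,\bm\theta_0)+o(1)$.
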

Denoting $\bm A(t)=-\left(\bm M(\bm h_0,\bm\theta_0)\right)^{-1}\sum_{r=0}^q(-1)^{r}\frac{d^r}{dt^r}\left[D_{\bm h_0}\left(\bm G(t,\bm h_0(t),\bm\theta_0)\right)w(t)\right]_{,r}$, we have
\begin{eqnarray}
\left(\bm M(\bm h_0,\bm\theta_0)\right)^{-1}\bm\Gamma(f)=\int_0^1\bm A(t)\bm\beta^T\bm N(t)d\bm t=\bm H_{n}^T\bm\beta,\label{linearize_hode}
\end{eqnarray}
where $\bm H_{n}^T=\int_0^1\bm A(t)\bm N^T(t)dt$ which is a matrix of order $p\times (k_n+m-1)$. %\citet{bontemps2011bernstein} gave a Bernstein-von Mises theorem for the posterior distribution of a parameter vector $\bm\beta$ under the working model $\bm Y=\bm X_n\bm\beta+\bm\varepsilon$.
Then in order to approximate the posterior distribution of $\bm\theta$, it suffices to study the asymptotic posterior distribution of the linear functional of $\bm\beta$ given by \eqref{linearize_hode}. The next theorem describes the approximate posterior distribution of $\sqrt{n}(\bm\theta-\bm\theta_0)$. We denote the posterior probability measure of two-step method by $\Pi_n^{**}$.

\begin{theorem}\label{bvm_ts_hode}
Let us denote
\begin{eqnarray}
\bm\mu^{**}_n&=&\sqrt{n}\bm H_{n}^T(\bm X^T_n\bm X_n)^{-1}\bm X^T_n\bm Y-\sqrt{n}\left(\bm M(\bm h_0,\bm\theta_0)\right)^{-1}\bm\Gamma(f_0),\nonumber\\
\bm\Sigma^{**}_n&=&n\bm H_{n}^T(\bm X^T_n\bm X_n)^{-1}\bm H_{n}\nonumber
\end{eqnarray}
and $\bm D=\left(\!\!\left(\left<A_{k}(\cdot),A_{k'}(\cdot)\right>_g\right)\!\!\right)_{k,k'=1,\ldots,p}$. If $\bm D$ is non-singular, then under the conditions of Lemma \ref{linearize_ts_hode},
\begin{eqnarray}
\norm{\Pi^{**}_n\left(\sqrt{n}(\bm\theta-\bm\theta_0)\in\cdot|\bm X,\bm Y\right)-N\left(\bm\mu^{**}_n,\sigma^2_0\bm\Sigma^{**}_n\right)}_{TV}=o_{P_0}(1).\label{thm2_hode_ts}
\end{eqnarray}
\end{theorem}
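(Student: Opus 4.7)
The overall strategy is to use Lemma~\ref{linearize_ts_hode} to reduce the statement about the posterior of $\sqrt{n}(\bm\theta-\bm\theta_0)$ to one about the posterior of an explicit affine functional of the B-spline coefficient vector $\bm\beta$, then carry out the Bernstein--von Mises calculation by exploiting conjugacy of the Gaussian--Gaussian nonparametric model.

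First, combining Lemma~\ref{linearize_ts_hode} with the identity \eqref{linearize_hode}, on an event $E_n$ with $\Pi^{**}_n(E_n^c\mid\bm X,\bm Y)=o_{P_0}(1)$ we have
\begin{eqnarray}
\sqrt{n}(\bm\theta-\bm\theta_0) = \sqrt{n}\,\bm H_n^T\bm\beta - \sqrt{n}\,\bigl(\bm M(\bm h_0,\bm\theta_0)\bigr)^{-1}\bm\Gamma(f_0) + \bm r_n,\nonumber
\end{eqnarray}
with $\sup_{E_n}\|\bm r_n\|=o(1)$. Since the pushforward of a probability measure under a map that differs from an affine map by $o(1)$ on a set of full measure differs from the pushforward under that affine map by $o(1)$ in total variation (provided the limiting law is absolutely continuous, guaranteed here by non-singularity of $\bm D$), it is enough to prove the stated approximation for the posterior of $\sqrt{n}\bm H_n^T\bm\beta-\sqrt{n}(\bm M(\bm h_0,\bm\theta_0))^{-1}\bm\Gamma(f_0)$.

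Second, by the conjugacy formula \eqref{posterior_hode}, conditional on $\sigma^2$ the posterior of $\sqrt{n}\bm H_n^T\bm\beta$ is exactly
\begin{eqnarray}
N_p\!\left(\sqrt{n}\,\bm H_n^T(\bm X_n^T\bm X_n+n^{-2}k_n\bm I)^{-1}\bm X_n^T\bm Y,\ \sigma^2 n\,\bm H_n^T(\bm X_n^T\bm X_n+n^{-2}k_n\bm I)^{-1}\bm H_n\right).\nonumber
\end{eqnarray}
Using the standard spectral estimate $\mathrm{mineig}(n^{-1}\bm X_n^T\bm X_n)\asymp k_n^{-1}$ for B-spline Gram matrices, the ridge correction $n^{-2}k_n\bm I$ is asymptotically negligible in both the mean and the covariance after sandwiching by $\bm H_n^T$, so these quantities reduce to $\bm\mu^{**}_n+\sqrt{n}(\bm M(\bm h_0,\bm\theta_0))^{-1}\bm\Gamma(f_0)$ and $\sigma^2\bm\Sigma^{**}_n$ up to $o_{P_0}(1)$ perturbations. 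To remove the conditioning on $\sigma^2$, I would establish $\Pi^{**}_n(|\sigma^2-\sigma_0^2|>\epsilon\mid\bm X,\bm Y)=o_{P_0}(1)$ via the usual inverse-gamma posterior concentration argument, which under $n^{1/(2m)}\ll k_n\ll n^{1/(4q+4)}$ produces a spline approximation error to $f_0$ and its derivatives that is $o(n^{-1/2})$.

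The main obstacle I anticipate is the passage from norm-convergence to the \emph{total variation} conclusion. Two issues must be handled carefully: (a) the linearization residual $\bm r_n$ must be converted from a uniform norm bound on the set $E_n$ into a total-variation bound on the pushforward, which is why one needs $\bm\Sigma^{**}_n$ to be uniformly well-conditioned — this is exactly what non-singularity of $\bm D$ buys, since $\bm D$ is the natural limit of $\bm\Sigma^{**}_n$ under the B-spline Gram convergence $n^{-1}\bm X_n^T\bm X_n\to\int\bm N\bm N^T g$; and (b) the replacement of $\sigma^2$ by $\sigma_0^2$ inside the Gaussian covariance must be passed through a Pinsker-type total-variation bound between two Gaussians with close parameters. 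Once both are in place, the conditional-on-$\sigma^2$ posterior of the affine functional differs from $N(\bm\mu^{**}_n,\sigma_0^2\bm\Sigma^{**}_n)$ by $o_{P_0}(1)$ in total variation, and combining with the linearization step yields \eqref{thm2_hode_ts}.
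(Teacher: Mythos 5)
Your proposal is correct and follows essentially the same route as the paper: reduce via Lemma~\ref{linearize_ts_hode} and \eqref{linearize_hode} to the affine functional $\sqrt{n}\bm H_n^T\bm\beta-\sqrt{n}(\bm M(\bm h_0,\bm\theta_0))^{-1}\bm\Gamma(f_0)$, use the conjugate Gaussian posterior \eqref{posterior_hode} to identify its exact conditional law given $\sigma^2$, and control the total variation distance to $N(\bm\mu^{**}_n,\sigma^2_0\bm\Sigma^{**}_n)$ by comparing nearby Gaussians (the paper does this via a Kullback--Leibler computation, which is the same device as your Pinsker-type bound) together with posterior concentration of $\sigma^2$.
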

As in RKSB and RKTB, we get similar results for two-step Bayesian approach when the regressor is non-random under appropriate conditions.

\section{Simulation Study}\label{sim_hode}
We consider the van der Pol equation
\begin{eqnarray}
\frac{d^2f_{\theta}(t)}{dt^2}-\theta(1-f^2_{\theta}(t))\frac{df_{\theta}(t)}{dt}+f_{\theta}(t)=0,\,\,t\in[0,1]\nonumber
\end{eqnarray}
with the initial conditions $f_{\theta}(0)=2, f'_{\theta}(0)=0$, to study the posterior distribution of $\theta$. The above system is not analytically solvable. We consider the situation where the true regression function belongs to the solution set. For a sample of size $n$, the predictor variables $X_1,\ldots,X_n$ are drawn from the $\mathrm{Uniform}(0,1)$ distribution. Samples of sizes 100 and 500 are considered. We simulate 1000 replications for each case. Under each replication a sample of size 1000 is drawn from the posterior distribution of $\theta$ using RKSB, RKTB and Bayesian two-step methods and then 95\% equal tailed credible intervals are obtained. The simulation results are summarized in the Table \ref{table_hode}. Bayesian two-step method is abbreviated as ``TS" in the table. We calculate the coverage and the average length of the corresponding credible interval over these $1000$ replications. We also compare these three methods with the nonlinear least squares (NLS) technique based on exhaustive numerical solution of the ODE where we construct 95\% confidence interval using asymptotic normality. The estimated standard errors of the interval length and coverage are given inside the parentheses in the table. %We also consider 1000 replications to construct the $95\%$ equal tailed confidence interval based on asymptotic normality as obtained from the estimation method introduced by \citet{varah1982spline} and modified and studied by \citet{brunel2008parameter}. We abbreviate this method by ``VB" in tables. The estimated standard errors of the interval length and coverage are given inside the parentheses in the tables.
The true parameter vector is chosen as $\theta_0=1.$ The true distribution of error is taken $N(0,(0.1)^2)$. We put an inverse gamma prior on $\sigma^2$ with shape and scale parameters being $99$ and $1$ respectively. For RKSB the prior for $\theta$ is chosen as independent Gaussian distribution with mean $6$ and variance $16$. We take $n$ grid points to obtain the numerical solution of the ODE by RK4 for a sample of size $n$. We take $m=5$ and $m=7$ for RKTB and Bayesian two-step method respectively. Looking at the order of $k_n$ suggested by Theorem \ref{bvm_rktb_hode}, $k_n$ is chosen as $3$ and $4$ for $n=100$ and $n=500$ respectively in RKTB. In Bayesian two-step method the choices are $2$ and $3$ for $n=100$ and $n=500$ respectively. The weight function for TS is chosen as $w(t)=t^2(1-t)^2$. 
$$\rm{``Table\,\ref{table_hode}\,here"}$$
Note the similarity in the outputs corresponding to RKTB, RKSB and NLS because of asymptotic efficiency while TS intervals are much wider. However, TS is computationally much faster.

\section{Proofs}\label{proof_hode}
We use the operators $\E_0(\cdot)$ and $\Var_0(\cdot)$ to denote expectation and variance with respect to $P_0$.
\begin{proof}[Proof of Theorem \ref{bvm_rksb_hode}]
As in Lemma 1 of \citet{bhaumik2014efficient} we can argue that there exists a compact subset $U$ of $(0,\infty)$ such that $\Pi_n(\sigma^2\in U|\bm X,\bm Y)\stackrel{P_0}\rightarrow1$. Let $\Pi_{U,n}(\cdot|\bm X,\bm Y)$ be the posterior distribution conditioned on $\sigma^2\in U$. By Theorem 2.1 of \citet{kleijn2012bernstein} if we can ensure that there exist stochastically bounded random variables $\bm\mu_n$ and a positive definite matrix $\bm\Sigma$ such that for every compact set $K\subset\mathbb{R}^{p+1}$,
\begin{eqnarray}
\sup_{\bm h\in K}\left|\log\frac{p^{(n)}_{\bm\gamma_0+\bm h/\sqrt{n},n}}{p^{(n)}_{\bm\gamma_0,n}}(\bm X,\bm Y)-\bm h^T\bm\Sigma^{-1}\bm\mu_n+\frac{1}{2}\bm h^T\bm\Sigma^{-1}\bm h\right|\rightarrow0,\label{cond1_hode}
\end{eqnarray}
in (outer) $P_0^{(n)}$-probability and that for every sequence of constants $M_n\rightarrow\infty$, we have
\begin{eqnarray}
P^{(n)}_0\Pi_{U,n}\left(\sqrt{n}\norm{\bm\gamma-\bm\gamma_0}>M_n|\bm X,\bm Y\right)\rightarrow0,\label{cond2_hode}
\end{eqnarray}
then $\norm{\Pi_{U,n}\left(\sqrt{n}(\bm\gamma-\bm\gamma_0)\in\cdot|\bm X,\bm Y\right)-\bm N(\bm\mu_n,\bm\Sigma)}_{TV}\stackrel{P_0}\rightarrow0$. To show that the conditions \eqref{cond1_hode} and \eqref{cond2_hode} hold, we prove results similar to Lemmas 2 to 5 of \citet{bhaumik2014efficient}. Following the steps of Lemma 2 of \citet{bhaumik2014efficient} we get $\bm\Sigma=\left({\begin{array}{cc}{\sigma^{2}_*}{\bm V^{-1}_{\theta_0}}&\bm 0\\\bm 0&{2\sigma^{4}_*}\end{array}}\right)$ with $$\bm V_{\bm\theta_0}=\int_0^1\left(\dot{f}^T_{\bm\theta_0}(t)\dot{f}_{\bm\theta_0}(t)-\frac{\partial}{\partial\bm\theta}\left.\left(\dot{ f}^T_{\bm\theta}(t)\left(f_0(t)-f_{\bm\theta_0}(t)\right)\right)\right|_{\bm\theta=\bm\theta_0}\right)g(t)dt$$ and $\bm\mu_n=\bm\Sigma\mathbb{G}_n\dot\ell_{\bm\gamma_0,n}$. Finally we get
\begin{eqnarray}
\norm{\Pi_n\left(\sqrt{n}(\bm\gamma-\bm\gamma_0)\in\cdot|\bm X,\bm Y\right)-\bm N(\bm\mu_n,\bm\Sigma)}_{TV}\stackrel{P_0}\rightarrow0\nonumber
\end{eqnarray}
since $\norm{\Pi_{n}-\Pi_{U,n}}_{TV}=o_{P_0}(1)$ and the result follows.
\end{proof}
\begin{proof}[Proof of Corollary \ref{eff_rksb_hode}]
The log-likelihood of the correctly specified model with Gaussian error is given by
\begin{eqnarray}
\ell_{\bm\gamma_0}(X,Y)&=&-\log\sigma_0-\frac{1}{2\sigma^2_0}|Y-f_{\bm\theta_0}(X)|^2+\log g(X)\nonumber.
\end{eqnarray}
Thus $\frac{\partial}{\partial\bm\theta_0}\ell_{\bm\gamma_0}(X,Y)=\sigma^{-2}_0\left(\dot{f}_{\bm\theta_0}(X)\right)^T\left(Y-f_{\bm\theta_0}(X)\right)$ and $\frac{\partial}{\partial\sigma^2_0}\ell_{\bm\gamma_0}(X,Y)=-\frac{1}{2\sigma^2_0}+\frac{1}{2\sigma^4_0}|Y-f_{\bm\theta_0}(X)|^2$. Hence, the Fisher information is given by
\begin{eqnarray}
\bm I(\bm\gamma_0)=\left({\begin{array}{cc}{{\sigma^{-2}_0}\int_0^1\dot{f}^T_{\bm\theta_0}(t)\dot{f}_{\bm\theta_0}(t)g(t)dt}&\bm 0\\\bm 0&{\sigma^{-4}_0}/2\end{array}}\right).\nonumber
\end{eqnarray}
Thus $\bm\Sigma=\left(\bm I(\bm\gamma_0)\right)^{-1}$ if the regression function is correctly specified and the true error distribution is $N(0,\sigma^2_0)$.
\end{proof}
\begin{proof}[Proof of Theorem \ref{bvm_rktb_hode}]
For $f(\cdot,\bm\beta)=\bm\beta^T\bm N(\cdot)$ we have $\int_0^1\bm C(t)\bm\beta^T\bm N(t)g(t)dt=\bm G_n^T\bm\beta$, where $$\bm G_{n}^T=\int_0^1\bm C(t)\bm N^T(t)g(t)dt$$ which is a matrix of order $p\times (k_n+m-1)$. %Consequently, the asymptotic variance of the conditional posterior distribution of $\bm G^T_n\bm\beta$ is $\sigma^2\bm G^T_{n}\left(\bm X^T_n\bm X_n+\frac{k_n}{n^2}\bm I_{k_n+m-1}\right)^{-1}\bm G_{n}$.
We can derive the posterior consistency of $\sigma^2$ similar to Lemma 11 of \citet{bhaumik2014efficient}. Following result similar to Lemma 9 of \citet{bhaumik2014efficient}, it can be shown that on a set with high posterior probability
\begin{eqnarray}
\norm{\sqrt{n}(\bm\theta-\bm\theta_0)-\left(\sqrt{n}\bm G_{n}^T\bm\beta-\sqrt{n}\left(\bm{J}({\bm\theta_0})\right)^{-1}\int_0^1\left(\dot f_{\bm\theta_0}(t)\right)^Tf_0(t)g(t)\right)}\rightarrow0\nonumber
\end{eqnarray}
as $n\rightarrow\infty$. Then it suffices to show that for any neighborhood $\mathscr{N}$ of $\sigma^2_0$,
\begin{eqnarray}
\sup_{\sigma^2\in\mathscr{N}}\left\|\Pi^*_n\left(\sqrt{n}\bm G_{n}^T\bm\beta-\sqrt{n}\left(\bm{J}({\bm\theta_0})\right)^{-1}\int_0^1\left(\dot f_{\bm\theta_0}(t)\right)^Tf_0(t)g(t)\in\cdot|\bm X,\bm Y,\sigma^2\right)-{N}(\bm\mu_n^*,\sigma^2\bm\Sigma_n^*)\right\|_{TV}\nonumber
\end{eqnarray}
is $o_{P_0}(1)$. The rest of the proof follows from that of Theorem 4.2 of \citet{bhaumik2014efficient}.
\end{proof}

\begin{proof}[Proof of Corollary \ref{eff_rktb_hode}]
The log-likelihood of the correctly specified model is given by
\begin{eqnarray}
\ell_{\bm\theta_0}(X,Y)&=&-\log\sigma_0-\frac{1}{2\sigma^2_0}|Y-f_{\bm\theta_0}(X)|^2+\log g(X)\nonumber.
\end{eqnarray}
Thus $\dot\ell_{\bm\theta_0}(X,Y)=-\sigma^{-2}_0\left(\dot{f}_{\bm\theta_0}(X)\right)^T\left(Y-f_{\bm\theta_0}(X)\right)$ and the Fisher information is given by
$\bm I(\bm\theta_0)=\sigma^{-2}_0\int_0^1\left(\dot{f}_{\bm\theta_0}(X)\right)^T\dot{f}_{\bm\theta_0}(t)g(t)dt$.
Following the proof of Lemma 10 of \citet{bhaumik2014efficient} we get $$\sigma^2_0\bm\Sigma_n^*\stackrel{P_0}\rightarrow\sigma^2_0\left(\bm{J}({\bm\theta_0})\right)^{-1}\int_0^1\left(\dot f_{\bm\theta_0}(t)\right)^T\dot f_{\bm\theta_0}(t)g(t)dt\,\left(\left(\bm{J}({\bm\theta_0})\right)^{-1}\right)^T.$$ This limit is equal to $\left(\bm I({\bm\theta_0})\right)^{-1}$ under the correct specification of the regression function as well as the error model.
\end{proof}

\begin{proof}[Proof of Lemma \ref{linearize_ts_hode}]
By the definitions of $\bm\theta$ and $\bm\theta_0$ we have
\begin{eqnarray}
\int_0^1\bm G(t,\bm h(t),\bm\theta)w(t)dt=\bm 0,\,
\int_0^1\bm G(t,\bm h_0(t),\bm\theta_0)w(t)dt=\bm 0.\nonumber
\end{eqnarray}
Subtracting the second equation from the first and applying the Mean-value Theorem, we get
\begin{eqnarray}
\,\,\int_0^1D_{\bm\theta_0}\left(\bm G(t,\bm h_0(t),\bm\theta_0)\right)w(t)dt(\bm\theta-\bm\theta_0)+\int_0^1D_{\bm h_0}\left(\bm G(t,\bm h_0(t),\bm\theta_0)\right)w(t)(\bm h(t)-\bm h_{0}(t))dt\nonumber\\
+O\left(\sup_{t\in[0,1]}\norm{\bm h(t)-\bm h_0(t)}^2\right)+O(\norm{\bm\theta-\bm\theta_0}^2)=\bm 0.\nonumber
\end{eqnarray}
Now we will show that $\int_0^1D_{\bm h_0}\left(\bm G(t,\bm h_0(t),\bm\theta_0)\right)w(t)(\bm h(t)-\bm h_{0}(t))dt$ is a linear functional of $f-f_0$. Note that  $\int_0^1D_{\bm h_0}\left(\bm G(t,\bm h_0(t),\bm\theta_0)\right)w(t)(\bm h(t)-\bm h_{0}(t))dt$ can be written as $$\sum_{r=0}^q\int_0^1\left[D_{\bm h_0}\left(\bm G(t,\bm h_0(t),\bm\theta_0)\right)w(t)\right]_{,r}\left(f^{(r)}(t,\bm\beta)-f^{(r)}_0(t)\right)dt.$$ We shall show that every term of this sum is a linear functional of $f-f_0$. We observe that for $r=0,\ldots,q$
\begin{eqnarray}
\lefteqn{\int_0^1\left[D_{\bm h_0}\left(\bm G(t,\bm h_0(t),\bm\theta_0)\right)w(t)\right]_{,r}\left(f^{(r)}(t,\bm\beta)-f^{(r)}_0(t)\right)dt}\nonumber\\
%&=&\int_0^1D_{\bm g_{06}}\left(\bm G(\bm t,\bm g_0(\bm t),\bm\theta_0)\right)w(\bm t)\left(\frac{\partial^2}{\partial t_1\partial t_2}f(\bm t)-\frac{\partial^2}{\partial t_1\partial t_2}f_{0}(\bm t)\right)dt_1dt_2\nonumber\\
%&=&\int_0^1\left[D_{\bm g_{06}}\left(\bm G(\bm t,\bm g_0(\bm t),\bm\theta_0)\right)w(\bm t)\left(\frac{\partial}{\partial t_2}f(\bm t)-\frac{\partial}{\partial t_2}f_0(\bm t)\right)|_0^1\right.\nonumber\\
%&&\left.-\int_0^1\frac{\partial}{\partial t_1}\left\{D_{\bm g_{06}}\left(\bm G(\bm t,\bm g_0(\bm t),\bm\theta_0)\right)w(\bm t)\right\}\left(\frac{\partial}{\partial t_2}f(\bm t)-\frac{\partial}{\partial t_2}f_0(\bm t)\right)dt_1\right]dt_2\nonumber\\
%&=&-\int_0^1\int_0^1\frac{\partial}{\partial t_1}\left\{D_{\bm g_{06}}\left(\bm G(\bm t,\bm g_0(\bm t),\bm\theta_0)\right)w(\bm t)\right\}\left(\frac{\partial}{\partial t_2}f(\bm t)-\frac{\partial}{\partial t_2}f_0(\bm t)\right)dt_2dt_1\nonumber\\
%&=&-\int_0^1\left[\frac{\partial}{\partial t_1}\left\{D_{\bm g_{06}}\left(\bm G(\bm t,\bm g_0(\bm t),\bm\theta_0)\right)w(\bm t)\right\}(f(\bm t)-f_{0}(\bm t))|_0^1\right.\nonumber\\
%&&\left.-\int_0^1\frac{\partial^2}{\partial t_1\partial t_2}\left\{D_{\bm g_{06}}\left(\bm G(\bm t,\bm g_0(\bm t),\bm\theta_0)\right)w(\bm t)\right\}(f(\bm t)-f_{0}(\bm t))dt_2\right]dt_1\nonumber\\
&=&(-1)^r\int_0^1\frac{d^r}{dt^r}\left[D_{\bm h_0}\left(\bm G(t,\bm h_0(t),\bm\theta_0)\right)w(t)\right]_{,r}(f(t,\bm\beta)-f_{0}(t))dt\nonumber
\end{eqnarray}
using integration by parts and the fact that the function $w(\cdot)$ and its first $(q-1)$ derivatives vanish at $0$ and $1$. Proceeding this way we get
\begin{eqnarray}
\bm M(\bm h_0,\bm\theta_0)(\bm\theta-\bm\theta_0)-\bm\Gamma(f-f_0)+O\left(\sup_{t\in[0,1]}\norm{\bm h(t)-\bm h_0(t)}^2\right)+O(\norm{\bm\theta-\bm\theta_0}^2)=\bm 0.\nonumber
\end{eqnarray}
Let $E_n=\{(\bm h,\bm\theta):\sup_{t\in[0,1]}\norm{\bm h(t)-\bm h_{0}(t)}\leq\epsilon_n, \norm{\bm\theta-\bm\theta_0}\leq\epsilon_n\}$, where $\epsilon_n\rightarrow0$. Using the steps of the proofs of Lemmas 2 and 3 of \citet{bhaumik2014bayesiant}, we can prove the posterior consistency of $\bm\theta$. Hence, there exists a sequence $\{\epsilon_n\}$ so that $\Pi(E^c_n|\bm Y)=o_{P_0}(1)$. Hence, on $E_n$
\begin{eqnarray}
\sqrt{n}(\bm\theta-\bm\theta_0)=\left((\bm M(\bm h_0,\bm\theta_0))^{-1}+o(1)\right)\sqrt{n}\bm\Gamma(f-f_0)+\sqrt{n}\sup_{t\in[0,1]}\norm{\bm h(t)-\bm h_{0}(t)}^2O(1).\nonumber
\end{eqnarray}
By result similar to Lemma 4 of \citet{bhaumik2014bayesiant}, $\sqrt{n}\,\bm\Gamma(f-f_0)$ assigns most of its mass inside a large compact set. Now using result similar to Lemma 2 of \citet{bhaumik2014bayesiant}, we can assert that on $E_n$, the second term on the display is $o(1)$ and the conclusion follows.
\end{proof}
\begin{proof}[Proof of Theorem \ref{bvm_ts_hode}]
By Lemma \ref{linearize_ts_hode} and \eqref{linearize_hode}, it suffices to show that for any $\sigma^2$ in a neighborhood of $\sigma^2_0$,
\begin{eqnarray}
\norm{\Pi\left(\sqrt{n}\bm H_{n}^T\bm\beta-\sqrt{n}\left(\bm M(\bm h_0,\bm\theta_0)\right)^{-1}\bm\Gamma(f_0)\in\cdot|\bm X,\bm Y\right)-{N}(\bm\mu_n^{**},\sigma^2\bm\Sigma_n^{**})}_{TV}
=o_{P_0}(1).\nonumber\\\label{thm4_1}
\end{eqnarray}
Note that the posterior distribution of $\bm H^T_{n}\bm\beta$ is a normal distribution with mean vector $$\bm H^T_n{\left({\bm X^T_n\bm X_n}+n^{-2}k_n\bm I_{k_n+m-1}\right)}^{-1}{\bm X^T_n\bm Y}$$ and dispersion matrix $$\sigma^2\bm H^T_n{\left({\bm X^T_n\bm X_n}+n^{-2}k_n\bm I_{k_n+m-1}\right)}^{-1}\bm H_{n}$$ respectively. We calculate the Kullback-Leibler divergence between two Gaussian distributions and show that it converges in $P_0$-probability to zero to prove the assertion. The rest of the proof is similar to that of Theorem 3 of \citet{bhaumik2014bayesiant}.
\end{proof}

\begin{table}[h]
\centering
\caption{\textit{Arguments of $\bm H$}}\label{arg_table}
\begin{tabular}{lllll}
%\hline
$\rho$&1&2&3&4\\
\hline
$\bm U^1$&$\bm\psi_{\bm\theta,r_n}(a_k)$&$\bm\psi_{\bm\theta,r_n}(a_k)$&$ \bm\psi_{\bm\theta,r_n}(a_k)+\frac{1}{2r_n}\bm\psi_{\bm\theta,r_n}^{(1)}(a_k)$&$ \bm\psi_{\bm\theta,r_n}(a_k)+\frac{1}{r_n}\bm\psi_{\bm\theta,r_n}^{(1)}(a_k)$\\
&&$+\frac{1}{2r_n}\bm\psi_{\bm\theta,r_n}^{(1)}(a_k)$&$+\frac{1}{4r^2_n}\bm\psi_{\bm\theta,r_n}^{(2)}(a_k)$&$+\frac{1}{2r^2_n} \bm\psi_{\bm\theta,r_n}^{(2)}(a_k)$\\
&&&&$+\frac{1}{4r^3_n}\bm\psi_{\bm\theta,r_n}^{(3)}(a_k)$\\
&&&&\\
$\bm U^2$&$\bm\psi_{\bm\theta,r_n}^{(1)}(a_k)$&$\bm\psi_{\bm\theta,r_n}^{(1)}(a_k)$&$\bm\psi_{\bm\theta,r_n}^{(1)}(a_k)+\frac{1}{2r_n}\bm\psi_{\bm\theta,r_n}^{(2)}(a_k)$&$\bm\psi_{\bm\theta,r_n}^{(1)}(a_k)+\frac{1}{r_n}\bm\psi_{\bm\theta,r_n}^{(2)}(a_k)$\\
&&$+\frac{1}{2r_n}\bm\psi_{\bm\theta,r_n}^{(2)}(a_k)$&$+\frac{1}{4r^2_n} \bm\psi_{\bm\theta,r_n}^{(3)}(a_k)$&$+\frac{1}{2r^2_n}\bm\psi_{\bm\theta,r_n}^{(3)}(a_k)$\\
&&&&$+\frac{1}{4r^3_n}\bm\psi_{\bm\theta,r_n}^{(4)}(a_k)$\\
\vdots&\vdots&\vdots&\vdots&\vdots\\
$\bm U^{q-3}$&$\bm\psi_{\bm\theta,r_n}^{(q-4)}(a_k)$&$\bm\psi_{\bm\theta,r_n}^{(q-4)}(a_k)$&$\bm\psi_{\bm\theta,r_n}^{(q-4)}(a_k)+\frac{1}{2r_n} \bm\psi_{\bm\theta,r_n}^{(q-3)}(a_k)$&$\bm\psi_{\bm\theta,r_n}^{(q-4)}(a_k)+\frac{1}{r_n}\bm\psi_{\bm\theta,r_n}^{(q-3)}(a_k)$\\
&&$+\frac{1}{2r_n}\bm\psi_{\bm\theta,r_n}^{(q-3)}(a_k)$&$+\frac{1}{4r^2_n}\bm\psi_{\bm\theta,r_n}^{(q-2)}(a_k)$&$+\frac{1}{2r^2_n}\bm\psi_{\bm\theta,r_n}^{(q-2)}(a_k)$\\
&&&&$+\frac{1}{4r^3_n}\bm\psi_{\bm\theta,r_n}^{(q-1)}(a_k)$\\
&&&&\\
$\bm U^{q-2}$&$\bm\psi_{\bm\theta,r_n}^{(q-3)}(a_k)$&$\bm\psi_{\bm\theta,r_n}^{(q-3)}(a_k)$&$\bm\psi_{\bm\theta,r_n}^{(q-3)}(a_k)+\frac{1}{2r_n} \bm\psi_{\bm\theta,r_n}^{(q-2)}(a_k)$&$\bm\psi_{\bm\theta,r_n}^{(q-3)}(a_k)+\frac{1}{r_n}\bm\psi_{\bm\theta,r_n}^{(q-2)}(a_k)$\\
&&$+\frac{1}{2r_n}\bm\psi_{\bm\theta,r_n}^{(q-2)}(a_k)$&$+\frac{1}{4r^2_n}\bm\psi_{\bm\theta,r_n}^{(q-1)}(a_k)$&$+\frac{1}{2r^2_n}\bm\psi_{\bm\theta,r_n}^{(q-1)}(a_k)+\frac{1}{4r^3_n}\bm k_1$\\
&&&&\\
$\bm U^{q-1}$&$\bm\psi_{\bm\theta,r_n}^{(q-2)}(a_k)$&$\bm\psi_{\bm\theta,r_n}^{(q-2)}(a_k)$&$\bm\psi_{\bm\theta,r_n}^{(q-2)}(a_k)+\frac{1}{2r_n} \bm\psi_{\bm\theta,r_n}^{(q-1)}(a_k)$&$\bm\psi_{\bm\theta,r_n}^{(q-2)}(a_k)+\frac{1}{r_n}\bm\psi_{\bm\theta,r_n}^{(q-1)}(a_k)$\\
&&$+\frac{1}{2r_n}\bm\psi_{\bm\theta,r_n}^{(q-1)}(a_k)$&$+\frac{1}{4r^2_n}\bm k_1$&$+\frac{1}{2r^2_n}\bm k_2$\\
&&&&\\
$\bm U^{q}$&$\bm\psi_{\bm\theta,r_n}^{(q-1)}(a_k)$&$\bm\psi_{\bm\theta,r_n}^{(q-1)}(a_k)$&$\bm\psi_{\bm\theta,r_n}^{(q-1)}(a_k)+\frac{1}{2r_n}\bm k_2$&$\bm\psi_{\bm\theta,r_n}^{(q-1)}(a_k)+\frac{1}{r_n}\bm k_3$\\
&&$+\frac{1}{2r_n}\bm k_1$&&\\
\hline
\end{tabular}
\end{table}

\begin{table}[ht]
\centering
\caption{\textit{Coverages and average lengths of the Bayesian credible intervals and confidence intervals}}\label{table_hode}
\begin{tabular}{|c|c|cc|cc|cc|cc|}
\hline
%\multirow{2}{*}{$n$}&&\multicolumn{4}{|c|}{$N(0,(0.2)^2)$}&\multicolumn{4}{|c|}{scaled $t_6$}\\
%\cline{3-10}
$n$&&\multicolumn{2}{|c|}{RKTB}&\multicolumn{2}{|c|}{RKSB}&\multicolumn{2}{|c|}{TS}&\multicolumn{2}{|c|}{NLS}\\
%\hline
&&coverage&length&coverage&length&coverage&length&coverage&length\\
&&(se)&(se)&(se)&(se)&(se)&(se)&(se)&(se)\\
\hline
100&$\theta$&95.6&0.34&94.7&0.33&94.9&1.95&95.2&0.32\\
&&(0.02)&(0.06)&(0.02)&(0.04)&(0.02)&(0.71)&(0.02)&(0.03)\\
\cline{2-10}
500&$\theta$&95.7&0.14&95.4&0.14&96.7&0.70&95.1&0.14\\
&&(0.01)&(0.01)&(0.01)&(0.01)&(0.01)&(0.13)&(0.01)&(0.01)\\
%\cline{2-10}
\hline
\end{tabular}
\end{table}
\bibliographystyle{apalike}
\bibliography{ref_hode}
\vspace{.5in}
Author address: Prithwish Bhaumik\\
Department of Statistics\\
North Carolina State University\\
SAS Hall, 2311 Stinson Drive\\
Raleigh, NC 27695-8203\\
USA.\\
Email: prithwish1987@gmail.com

\end{document}